\documentclass[12pt]{article}
\usepackage{calligra}
\usepackage[T2A]{fontenc}
\usepackage[utf8]{inputenc}
\usepackage[title]{appendix}
\usepackage{blkarray}
\usepackage{amsmath}
\usepackage{amsthm}
\usepackage{amssymb}
\usepackage{enumitem}
\usepackage{mathtools}
\usepackage{graphicx}
\usepackage{comment}
\usepackage[normalem]{ulem}
\usepackage[colorlinks=true,unicode]{hyperref}
\usepackage[pdftex,dvipsnames]{xcolor}
\usepackage{xargs}
\usepackage[colorinlistoftodos,prependcaption,textsize=tiny]{todonotes}

\newcommandx{\unsure}[2][1=]{\todo[linecolor=red,backgroundcolor=red!25,bordercolor=red,#1]{#2}}
\newcommandx{\dialog}[2][1=]{%
  \todo[
    linecolor=red!70!black,
    backgroundcolor=red!8,
    bordercolor=red!70!black,
    #1
  ]{\small #2}}
\newcommandx{\change}[2][1=]{\todo[linecolor=blue,backgroundcolor=blue!25,bordercolor=blue,#1]{#2}}
\newcommandx{\info}[2][1=]{\todo[linecolor=OliveGreen,backgroundcolor=OliveGreen!25,bordercolor=OliveGreen,#1]{#2}}
\newcommandx{\improvement}[2][1=]{\todo[linecolor=Plum,backgroundcolor=Plum!25,bordercolor=Plum,#1]{#2}}
\newcommandx{\hide}[2][1=]{\todo[disable,#1]{#2}}

\numberwithin{equation}{section}

\def\authorfont{\footnotesize}

\def\keywords#1{\par
	\vspace*{8pt}
	{\authorfont{\leftskip24pt\rightskip\leftskip
	\noindent{\it Keywords}\/:\ #1\par}}\par}

\def\ccode#1{\par		
	\vspace*{8pt}
	{\authorfont{\leftskip24pt\rightskip\leftskip
	\noindent #1\par}}\par}

\hypersetup{
  colorlinks=true,
  linkcolor=blue,
  citecolor=blue,
  urlcolor=blue,
  pdftitle={Invariants of the Colored Braid Groupoid}
}

\title{Invariants of the Colored Braid Groupoid}
\author{Illia E. Rohozhkin\thanks{Email: \texttt{rogozhkin.ie@phystech.edu}}}

\theoremstyle{definition}
\newtheorem{theorem}{Theorem}[section]

\newtheorem{definition}{Definition}[section]
\newtheorem{example}{Example}[section]
\newtheorem{remark}{Remark}[section]

\date{\today}

\begin{document}

\maketitle

\begin{abstract}
In this paper, a braid is regarded as a dynamical system of points in the plane. The states of this dynamical system are given by Delaunay triangulations. This construction makes it possible to define an abstract groupoid $\overset{\mathllap{abc}}{\mathcal{G}^{4}_{n+3}}$, which gives a representation of the colored braid groupoid $\text{ColB}(n)$. We define homomorphisms ${f}_{n+3}:\overset{\mathllap{abc}}{\mathcal{G}^{4}_{n+3}} \rightarrow\text{GL}_{2n+1}(\mathbb{Q})$ and ${f}'_{n+3}:\overset{\mathllap{abc}}{\mathcal{G}^{4}_{n+3}} \rightarrow\text{GL}_{2n+1}(\mathbb{C})$, and describe an algorithm for computing the resulting invariants.
\end{abstract}

\keywords{colored braid groupoid; braid group; pure braid group; Delaunay triangulation; orthogonal operators; pentagon equation; invariant.}

\ccode{MSC 2020: 57K10, 57K20}

\section{Introduction}

In \cite{Rohozhkin}, an operator invariant of the pure braid group $PB_{n+3}$ was constructed. It will be shown below that this invariant is an invariant of the colored braid groupoid $\text{ColB}(n)$. The group $PB_{n+3}$ is a subgroupoid of the $\text{ColB}(n)$.

We construct several representations of the groupoid $\text{ColB}(n)$ in this paper.

\subsection{Basic definitions}

Consider the lines $\{y=0,z=1\}$ and $\{y=0,z=0\}$ in $\mathbb{R}^3$ and choose $n$ points on each of these lines having abscissas $1,\dots,n$.

\begin{definition}
An $n${\em -strand braid} is a set of $n$ non-intersecting smooth paths connecting the chosen points on the first line with the points of the second line (in an arbitrary order), such that the projection of each of these paths onto the $Oz$-axis represents a diffeomorphism.
\end{definition}

An example of a braid is shown in Fig. \ref{fig:braid}.

\begin{figure}[h]
    \centering
    \includegraphics[width=0.8\textwidth]{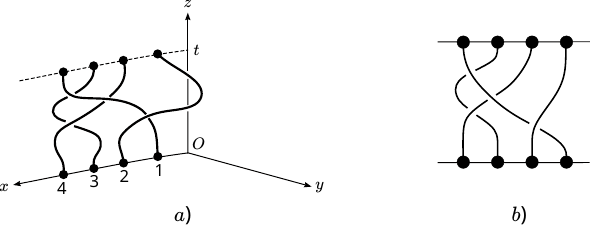}
    \caption{a) A braid; b) its diagram.}
    \label{fig:braid}
\end{figure}

Braids with the same number of strands can be multiplied.

The isotopy classes of braids form the {\em braid group} $Br(n)$. There are also other braid groups. For example the group $B_n$ given by the generators $\sigma_i$ and the Artin relations.

Braids in which each strand connects points with the same abscissas are called {\em pure}. Pure braids form a subgroup $PB_n$ of the braid group $Br(n)$.

More details on classical braids can be found in \cite{Manturov-Knots}.

\subsection{Colored braids}

The strands of a braid can be labeled by different labels. Such braids will be called {\em colored} braids. The product of two such braids is defined only if the labels at the endpoints of the strands of the first braid coincide with the labels at the initial points of the strands of the second braid. Such braids form a groupoid denoted by $\text{ColB}(n)$ \cite{Licata-col-braid}. An example of the product of two colored braids is shown in Fig. \ref{fig:col_braid_mult}.

\begin{figure}[h]
    \centering
    \includegraphics[width=0.8\textwidth]{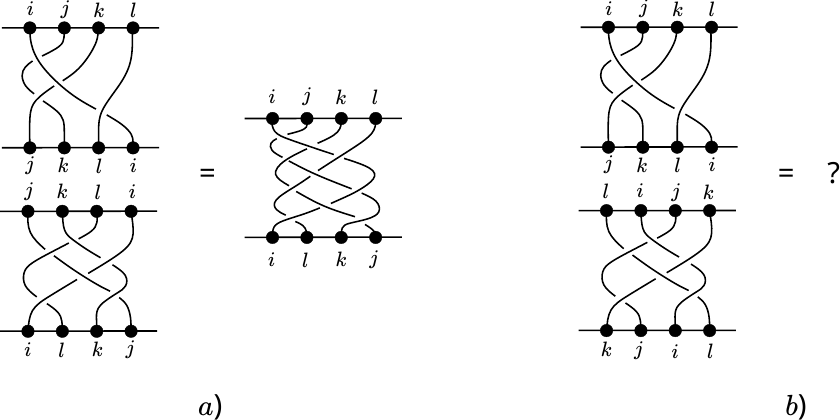}
    \caption{Multiplication of colored braids: a) the product of two braids; b) braids for which the product is not defined.}
    \label{fig:col_braid_mult}
\end{figure}

\subsection{Dynamical systems}

Braids can also be regarded as {\em dynamical systems} of $n$ points. A precise mathematical definition of a dynamical system can be found in \cite{Manturov-Non-Reid} or in the book \cite{Invariants-and-Pictures}. In this paper, by a dynamical system of $n$ points we mean $n$ points moving in the plane over the time interval $0 \le t \le 1$ in such a way that no two points ever pass through one another; that is, the points are not allowed to collide.

A dynamical system can be associated with a braid as follows. Let a braid be given. Consider a plane $P$ parallel to the plane $xOy$ and located at the upper base of the braid. We move this plane downward along the $Oz$-axis and observe the intersection points of the strands of the braid with this plane. The points in the plane $P$ move along the projections of the braid strands onto this plane, thereby forming a dynamical system. This construction is illustrated in Fig. \ref{fig:dynamics}.

\begin{figure}[h]
    \centering
    \includegraphics[width=0.3\textwidth]{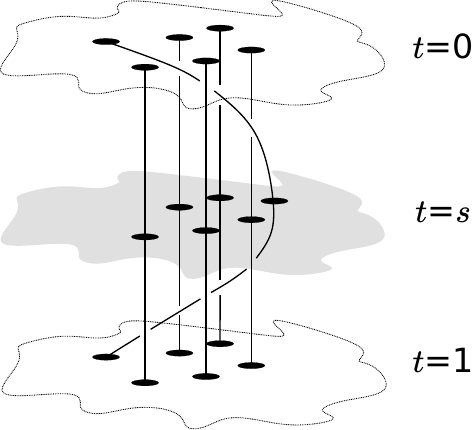}
    \caption{The dynamical system corresponding to a pure braid.}
    \label{fig:dynamics}
\end{figure}

The position of a point in the plane is determined by a pair of its coordinates. A dynamical system can be regarded as an ordered set of {\em states} $D(t)$. Each such state is an ordered collection of pairs of coordinates of all points in the plane $P$ at time $t$ ($0 \le t \le 1$). Here $D(0)$ is the initial state of the dynamical system, and $D(1)$ is its final state. It is important to note that, if the dynamical system corresponds to a braid, then the collections $D(0)$ and $D(1)$ differ from each other only by a permutation of their elements. A pure braid corresponds to the trivial permutation; therefore, one says that the initial and final states of its dynamical system coincide.

With each state of a dynamical system, one can associate a certain geometric construction built on its points. Such constructions may include lines, circles, graphs, and so on. Having chosen a suitable construction, one can distinguish a finite set of time moments $t_i$ ($i=1,\ldots,N$) at which this construction acquires ``unusual'' properties, or {\em singularities}. For example, during the motion of the points, two distinct lines, or circles, constructed from these points may coincide; in a graph, one edge may be replaced by another; and so forth. A braid can then be represented as a sequence of such events. It is important that the number of these events is finite. This makes it possible to construct new invariants for braid groups and braid groupoids.

More details on this approach and on the invariants obtained in this way can be found in the book \cite{Invariants-and-Pictures}, as well as in \cite{Fedoseev-Manturov-Nikonov, Manturov-Non-Reid, Manturov-Nikonov, Rohozhkin}. A similar construction is also used to construct operator invariants of braids and knots; see \cite{Tomotada}.

In this paper, we consider a braid as a dynamical system with three fixed points, and we associate {\em Delaunay triangulations} with the states of this dynamical system.

\subsubsection{Delaunay triangulation}

Recall that any graph can be specified by the set of its vertices $V$ and the set of unordered pairs of these vertices $E$, where each pair is an edge of the graph. All elements of the set $E$ must be distinct and loops are not allowed ($\{x,x\} \not\in E$, $x\in V$).

A graph is called {\em planar} if it can be embedded in the plane in such a way that intersections of edges occur only at its vertices. Such an embedding of a graph in the plane will be called a plane embedding.

A {\em triangulation} is a graph in whose plane embedding every face is bounded by a triangle. Every triangulation is a planar graph. In what follows, to denote a triangulation, we shall use only the set of its edges $E$, assuming that the set of its vertices $V$ can be recovered as the set of elements occurring in the pairs from $E$.

\begin{definition}\label{def:delone}
A {\em Delaunay triangulation} is a planar graph satisfying the following conditions:
\begin{enumerate}
\item \label{delone:3_dots} three points are connected by edges if the interior of the circle passing through these points contains no other points;
\item \label{delone:no_4_dots} there is no quadruple of points on the same circle such that the circle passing through them contains no other points in its interior.
\end{enumerate}
\end{definition}

It follows from the definition of a Delaunay triangulation that its outer face is not always triangular. We shall call a Delaunay triangulation {\em strict} if each of its faces is triangular.

More details on Delaunay triangulations can be found in the book \cite{Aurenhammer-Klein-Lee}.

\begin{example}\label{ex:delone_set}
Examples of strict Delaunay triangulations are shown in Fig. \ref{fig:delaunay_triangles}. The leftmost graph shown in this figure is uniquely determined by the following set of edges:
$$
\{\{1,2\},\{1,3\},\{1,4\},\{1,5\},\{2,3\},\{2,4\},\{2,5\},\{3,4\},\{4,5\}\}.
$$
\end{example}

\subsubsection{Transformations of Delaunay triangulations}

Let a dynamical system corresponding to a braid on $n$ strands be given. If, in the state $D(t)$, the points of the dynamical system satisfy property \ref{delone:no_4_dots} of Definition \ref{def:delone}, then we shall say that the points are in {\em general position}. In what follows, we shall always assume that the points of the dynamical system in the states $D(0)$ and $D(1)$ are in general position.

In the dynamical system under consideration, at a time moment $t_0$, we construct the Delaunay triangulation on its points, taking them as the vertices of the triangulation. Denote this triangulation by $\mathcal{T}_0$. We then start moving the points, simultaneously transforming the triangulation by stretching and contracting the corresponding edges in such a way that their endpoints always coincide with the moving vertices. We shall call such transformations {\em elementary}.

If property \ref{delone:no_4_dots} of Definition \ref{def:delone} fails at a time moment $t_i$, then the graph constructed on these points using only property \ref{delone:3_dots} of Definition \ref{def:delone} is not a Delaunay triangulation. Therefore, in order to work only with Delaunay triangulations, we identify such an event with a new transformation of the triangulation. At this moment, one edge is replaced by another, as shown in Fig. \ref{fig:a_flip}, and as a result we obtain a new Delaunay triangulation $\mathcal{T}_{i}$. Such a transformation is called a {\em flip}. If a flip replaces the diagonal $ik$ of the quadrilateral $(i,j,k,l)$ by the diagonal $jl$, then we shall denote this flip by $ik\rightarrow jl$.

\begin{remark}
In Fig. \ref{fig:a_flip}, at the time moment $t_i$ (in the middle), the edge $\{1,3\}$ is replaced by the edge $\{2,4\}$; we assume that the edge $\{1,3\}$ is already absent at this moment but it is shown by a dashed line in the figure for clarity.
\end{remark}

\begin{figure}[h]
    \centering
    \includegraphics[width=0.7\textwidth]{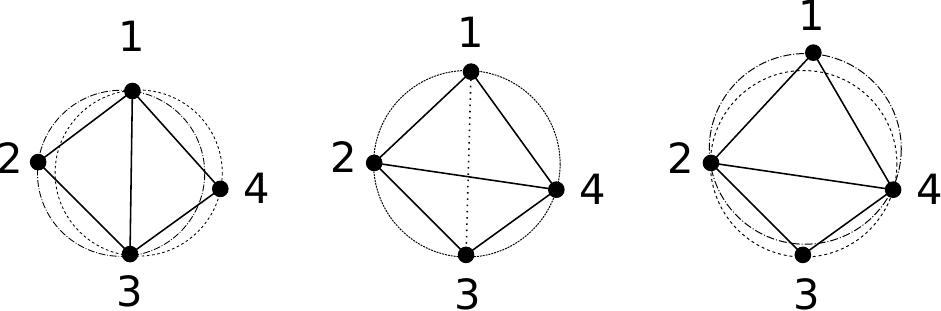}
    \caption{The flip $13 \rightarrow 24$.}
    \label{fig:a_flip}
\end{figure}

\subsubsection{Delaunay triangulations with fixed points}\label{sec:stable_delone}

There is an embedding $Br(n) \subset Br(m)$ for natural numbers $n < m$. A braid from $Br(n)$ can be regarded as a braid in the group $Br(m)$ in which the last $(m-n)$ strands are vertical and separated from the others \cite{Manturov-Knots}. This allows us to add additional {\em fixed} points to any dynamical system corresponding to a braid. These points are needed in order to obtain a more stable construction in which the number of edges of the triangulation always remains constant\footnote{New triangles (and hence new edges) may appear at the boundary of the Delaunay triangulation when one point passes around an extreme point.}. This construction was used in \cite{Rohozhkin}.

The main idea of this construction is to add three additional points to the plane in such a way that all the other points of the dynamical system never leave the exterior triangle formed by these additional points; see Fig. \ref{fig:delaunay_triangles}. Any Delaunay triangulation constructed on the points of such a dynamical system is strict.

The triangulation changes during the motion of the points; however, the number of edges as well as the number of triangles always remains constant in this construction and is determined by the well-known formula for a planar triangulation with a triangular outer face:
\begin{equation}\label{eq:edges_number}
e = 3v - 6,
\end{equation}
where $v$ is the number of vertices of the triangulation, that is, the total number of points of the dynamical system.

\begin{figure}[h]
    \centering
    \includegraphics[width=0.8\textwidth]{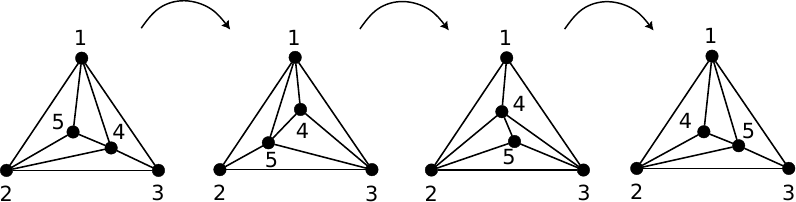}
    \caption{Transformations of Delaunay triangulations constructed on the points of a dynamical system consisting of two moving points $4,5$ and three fixed points $1,2,3$.}
    \label{fig:delaunay_triangles}
\end{figure}

\begin{remark}
In what follows, unless otherwise stated, with each braid on $n$ strands we associate a dynamical system with three additional fixed points. Thus, the total number of points in such a dynamical system is $m=n+3$.
\end{remark}

The three fixed points also allow us to reconstruct uniquely the plane embedding of a Delaunay triangulation, which will be needed below.

\section{The abstract groupoid of colored braids}

We shall consider dynamical systems corresponding to colored braids on $n+3$ strands. Each strand of a colored braid has a unique label $i\in \{1,\ldots,n+3\}$. The points of the dynamical systems have the same labels as the corresponding strands in the braids, and the three additional fixed points have their own labels. We construct Delaunay triangulations on the points of these dynamical systems.

The triangulations undergo elementary transformations and flips during the motion of the points. A flip transforming a Delaunay triangulation $H_i$ (see Example \ref{ex:delone_set}) into a Delaunay triangulation $H_j$ will be denoted by $g^{H_i}_{H_j}$.

In what follows, the total number of possible Delaunay triangulations in a configuration of $n+3$ points will be denoted by $\bar{n}_t$.

Define a multiplication operation on pairs of flips by the rule
\begin{equation}\label{eq:flip_operation}
\left(g^{H_i}_{H_j},g^{H_j}_{H_k}\right) \mapsto g^{H_i}_{H_j}g^{H_j}_{H_k}.
\end{equation}

Clearly, this operation is associative.

\begin{definition}
The groupoid $\overset{\mathllap{abc}}{\mathcal{G}^{4}_{n+3}}$ is the groupoid given by the generators $g^{H_i}_{H_j}$, the associative operation (\ref{eq:flip_operation}), and the following relations:
\begin{enumerate}[label=\Roman*]
\item \ \  $g^{H_i}_{H_i}=1$,\label{rel:identity}
\item \ \ $g^{H_i}_{H_j}g^{H_j}_{H_i}=1$,\label{rel:4_cycle}
\item \ \ $g^{H_i}_{H_j}g^{H_j}_{H_k}g^{H_k}_{H_p}g^{H_p}_{H_q}g^{H_q}_{H_i}=1$,\label{rel:5_cycle}
\item \ \ $g^{H_i}_{H_j}g^{H_j}_{H_k}=g^{H_i}_{H_p}g^{H_p}_{H_k}$,\label{rel:far_commute}
\end{enumerate}

where $H_i,H_j,H_k,H_p,H_q$ are sets of the same cardinality, each of which is a strict Delaunay triangulation,
$$
|H_i \cap H_j| = \begin{cases} 3n+1, \text{ if }i \neq j, \\ 3n+3, \text{ otherwise}, \end{cases}
$$
$a,b,c \in \{1,\ldots,n+3\}$; $|\{a,b,c\}|=3$; $abc$ is the outer face of all Delaunay triangulations; and $i,j,k,p,q \in \{1,\dots,\bar{n}_t\}$.
\end{definition}

\begin{remark}
The superscript $4$ in the notation $\overset{\mathllap{abc}}{\mathcal{G}^{4}_{n+3}}$ reflects the fact that a flip occurs precisely when four points lie on the same circle.
\end{remark}

\subsection{A representation of the colored braid groupoid}

We have shown above that with each colored braid on $n$ strands, and, in general, with each ordinary braid, one can associate a dynamical system of $n+3$ points, and with each such dynamical system one can associate a sequence of flips of Delaunay triangulations. This allows us to associate a sequence of flips with a braid. Hence, we can define a map $f:\text{ColB}(n)\rightarrow \overset{\mathllap{abc}}{\mathcal{G}^{4}_{n+3}}$ as follows:

\begin{equation}\label{eq:group_map}
f(b) = g^{H_1}_{H_2}g^{H_2}_{H_3}\cdots g^{H_{k-1}}_{H_{k}},
\end{equation}
whre $b \in \text{ColB}(n)$; $g^{H_i}_{H_j} \in \overset{\mathllap{abc}}{\mathcal{G}^{4}_{n+3}}$; $i,j,k \in \{1,\dots,\bar{n}_t\}$.

\begin{theorem}\label{theo:col_braids_repr}
The map $f:\text{ColB}(n)\rightarrow \overset{\mathllap{abc}}{\mathcal{G}^4_{n+3}}$, defined by formula (\ref{eq:group_map}), is a homomorphism.
\end{theorem}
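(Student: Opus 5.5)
The argument has two parts. First show that $f$ is well defined, i.e.\ that the element $f(b)$ depends only on the isotopy class of the colored braid $b$. Then show multiplicativity and compatibility with identities and inverses.

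\emph{Well-definedness.} Fix a representative of $b$ and its dynamical system with the three fixed points $a,b,c$ forming the outer face. After a small perturbation, the motion is generic: at finitely many moments exactly four points become concyclic with an empty circle, and each such moment produces one flip. This gives a word $g^{H_1}_{H_2}\cdots g^{H_{k-1}}_{H_k}$ in the generators.

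Now take two isotopic representatives. An isotopy of braids gives a one-parameter family of dynamical systems $D_s(t)$, $s\in[0,1]$, with all endpoints fixed. We may assume the family is generic as a two-parameter family in $(s,t)$. Then the flip word changes only at finitely many codimension-two events of the $(s,t)$-plane. I would list them as follows.
\begin{enumerate}
\item A flip is born or dies together with its inverse, when the concyclicity moment is tangential in $t$. This is relation~\ref{rel:4_cycle}.
\item Two independent flips, on quadrilaterals sharing no triangle, exchange their order in $t$. This is relation~\ref{rel:far_commute}.
\item Five points become concyclic with empty interior at one moment. Around this codimension-two point, the Delaunay triangulations of the five points in convex position form the pentagon of five flips. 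Going around the singular point in the $(s,t)$-plane therefore replaces one half of the pentagon by the other half. This is relation~\ref{rel:5_cycle}, and it can be rearranged using relation~\ref{rel:4_cycle}.
\item Generic moves that create no flip change nothing; trivial steps are absorbed by relation~\ref{rel:identity}.
\end{enumerate}
The three fixed points keep every triangulation strict with outer face $abc$. Hence the set of events is exactly this list; in particular there are no boundary events in which the convex hull changes. So $f(b)$ is independent of the representative.

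\emph{Homomorphism.} If the colored product $b_1b_2$ is defined, the labels at the end of $b_1$ agree with those at the start of $b_2$. So the final labelled configuration of $D_{b_1}$ equals the initial labelled configuration of $D_{b_2}$, and in particular the final triangulation $H_k$ of the first equals the initial triangulation of the second. The dynamical system of $b_1b_2$ is the concatenation, after reparametrising time. Its flip sequence is the concatenation of the two flip sequences, so $f(b_1b_2)=f(b_1)f(b_2)$ in the groupoid, with composability guaranteed by the matching triangulation.

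The trivial braid gives a stationary system with no flips, so it maps to $1$ by relation~\ref{rel:identity}. The inverse braid runs time backwards, so each flip $g^{H_i}_{H_j}$ is replaced by $g^{H_j}_{H_i}$ in reverse order. By relation~\ref{rel:4_cycle} this gives $f(b^{-1})=f(b)^{-1}$.

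The main obstacle is the classification of codimension-two events in the generic isotopy. The delicate point is to check that the degenerations of Delaunay triangulations, namely five cocircular points and two simultaneous independent flips, are exactly relations~\ref{rel:5_cycle} and~\ref{rel:far_commute}. One must also rule out other degenerations, such as collinear-with-empty-circle configurations or four points cocircular twice nearby. For this I would use the standard lifting to the paraboloid. Delaunay triangulations become lower convex hulls, and the codimension-two degenerations of lower hulls of points in $\mathbb{R}^3$ in general position are exactly the ones listed above.
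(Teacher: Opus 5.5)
Your proposal is correct and takes the same approach as the paper. Multiplicativity comes from concatenating flip sequences. Isotopy invariance comes from matching the local events of a generic isotopy with relations \ref{rel:identity}--\ref{rel:far_commute}: trivial moves, a flip followed by its inverse in one quadrilateral, the five-cocircular pentagon event, and reordering two independent flips. The paper also leaves the completeness of this event list unproved and defers it to \cite{Invariants-and-Pictures}; your generic two-parameter-family framing with the paraboloid lifting is a reasonable way to supply exactly that step.
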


\begin{proof}
The compatibility of the map with the multiplication operation is given directly by formula (\ref{eq:group_map}). It remains to prove that braid isotopy is expressed by the relations \ref{rel:identity}--\ref{rel:far_commute}.

Consider two isotopic braids and a plane parallel to $xOy$. We move this plane from the beginning of each braid to its end along the $Oz$-axis, observing the intersection points of this plane with the strands of the braid, as well as the transformations of the Delaunay triangulations on these points. It is not difficult to see that the motions of the points for the first braid may differ from the motions of the points for the second braid by the events shown in Fig. \ref{fig:event_elementary}, Fig. \ref{fig:event_direct_indirect}, Fig. \ref{fig:event_pentagon}, and Fig. \ref{fig:event_commutativity}. These events correspond precisely to the relations \ref{rel:identity}--\ref{rel:far_commute}.

\begin{figure}[!htbp]
    \centering
    \includegraphics[width=0.9\textwidth]{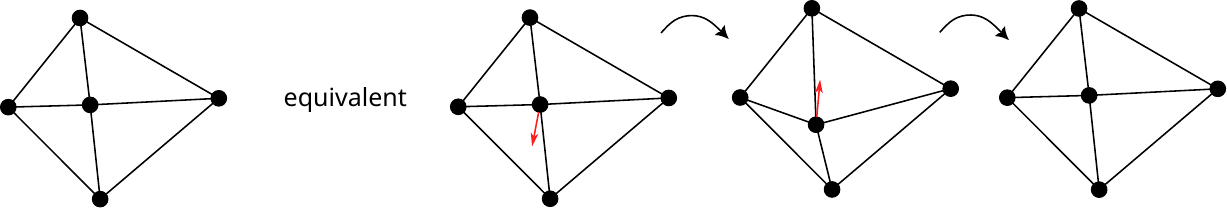}
    \caption{Two isotopic braids may differ from each other by elementary transformations; the arrows indicate the directions of motion of the point.}
    \label{fig:event_elementary}
\end{figure}

\begin{figure}[!htbp]
    \centering
    \includegraphics[width=0.9\textwidth]{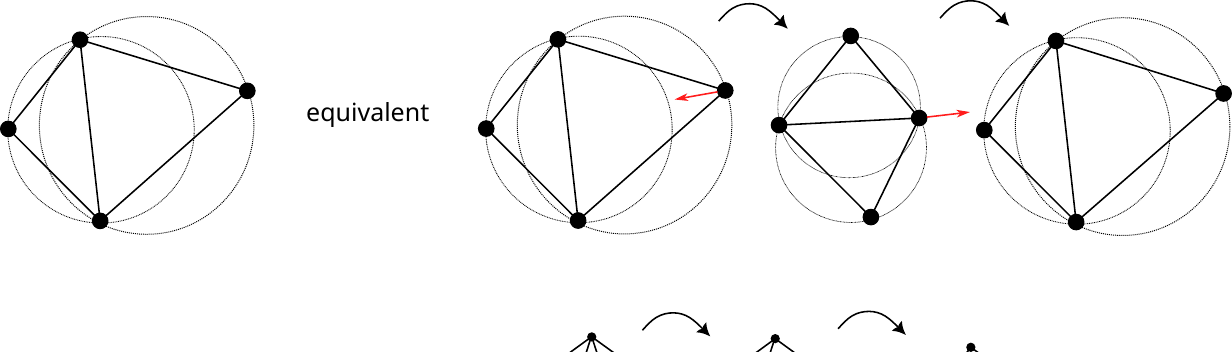}
    \caption{Two isotopic braids may differ from each other by motions of points in which two consecutive flips occur in the same quadrilateral whose interior contains no other points; the arrows indicate the directions of motion of the points; the circles are shown for clarity.}
    \label{fig:event_direct_indirect}
\end{figure}

\begin{figure}[!htbp]
    \centering
    \includegraphics[width=0.85\textwidth]{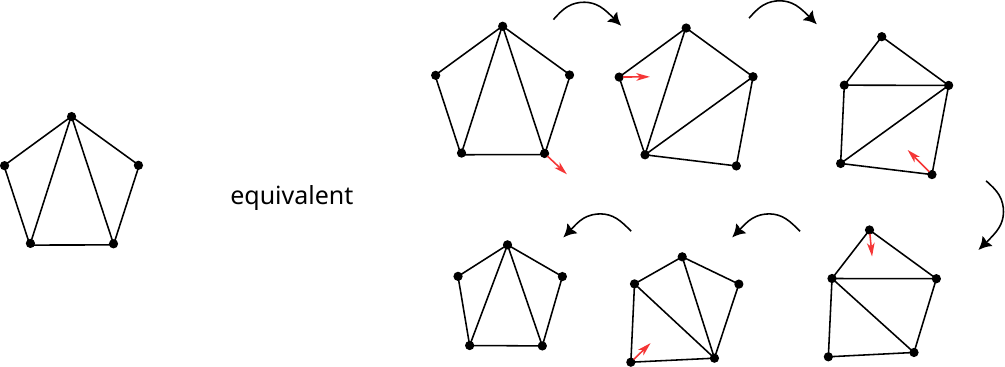}
    \caption{Two isotopic braids may differ from each other by motions of points that lead to a cyclic change of the triangulation of a pentagon whose interior contains no other points; the arrows indicate the directions of motion of the points.}
    \label{fig:event_pentagon}
\end{figure}

\begin{figure}[!htbp]
    \centering
    \includegraphics[width=0.75\textwidth]{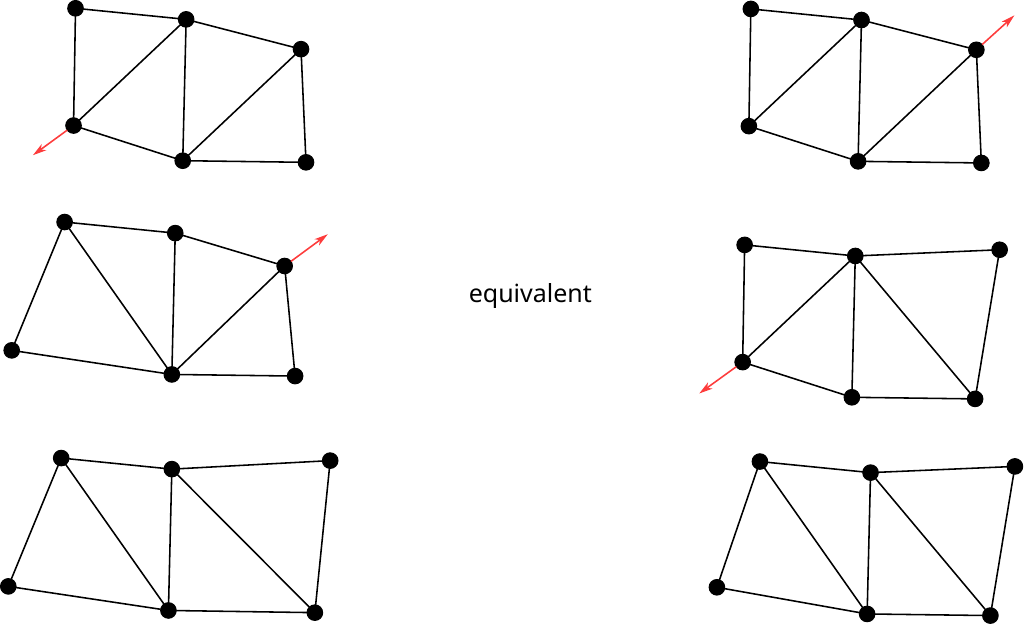}
    \caption{Two isotopic braids may differ from each other by the order of two flips that occur in different quadrilaterals whose interiors contain no other points and which intersect in at most two points; the arrows indicate the directions of motion of the points.}
    \label{fig:event_commutativity}
\end{figure}

It turns out that the events listed above are sufficient to express all other events that may arise during the motion of points in the plane and may lead to other relations on braids. We do not give the proof of this part in the present paper, but a more detailed discussion of its idea can be found in the book \cite{Invariants-and-Pictures}.
\end{proof}

\subsection{A matrix representation of the groupoid $\overset{\mathllap{abc}}{\mathcal{G}^{4}_{n+3}}$}\label{sec:groupoid_repr}

The construction described in Section \ref{sec:stable_delone} was already used in \cite{Rohozhkin}, where a matrix representation of the pure braid group by matrices of size $(2n+1)\times(2n+1)$ was obtained. We shall construct a representation of the groupoid $\overset{\mathllap{abc}}{\mathcal{G}^{4}_{n+3}}$ by the same matrices.

The upper index $H_i$ and the lower index $H_j$ of each generator $g^{H_i}_{H_j} \in \overset{\mathllap{abc}}{\mathcal{G}^{4}_{n+3}}$ are Delaunay triangulations, which are specified by sets of edges. From each such set of edges, one can form the set of triangles of the triangulation. Since we consider only strict Delaunay triangulations with three fixed points, the outer triangular face can be discarded. Assign to each point with index $i$ a variable $\zeta_i$. Variables with distinct indices must be pairwise distinct and their values must be chosen from a field containing at least $n+3$ pairwise distinct elements. We take the field $\mathbb{Q}$ as such a field (although other fields may also be used). If a Delaunay triangulation is specified by a set of edges $H$, then the corresponding set of triangles will be denoted by $\mathbf{T}$. Elements of the set $\mathbf{T}$ will be denoted by the symbol $\Delta$. These notations will also be used with indices, whose meaning will be clear from the context; for example, $H_i$, $\mathbf{T}_i$, and $\Delta_{pqr}$.

It follows from formula (\ref{eq:edges_number}) and Euler's formula for a connected planar graph (i.e. $v-e+f=2$, where $v$ is the number of vertices, $e$ is the number of edges and $f$ is the number of faces of the graph) that the number of elements in the set $\mathbf{T}$ excluding the outer face is given by
$$
|\mathbf{T}| = 2n+1.
$$

Each element $\Delta \in \mathbf{T}$ can be regarded as a vector and the whole set $\mathbf{T}$ as a basis of the vector space $\mathbb{Q}^{(\mathbf{T})}$ over the field $\mathbb{Q}$. To each generator $g^{H_t}_{H_p} \in \overset{\mathllap{abc}}{\mathcal{G}^{4}_{n+3}}$, we assign a linear operator $\gamma_{\mathbb{Q}^{(\mathbf{T}_t)},\mathbb{Q}^{(\mathbf{T}_p)}}:\mathbb{Q}^{(\mathbf{T}_t)} \rightarrow \mathbb{Q}^{(\mathbf{T}_p)}$, which is defined by the following formulas:

\begin{gather}
\gamma(\Delta)=\Delta \mbox{ for any } \Delta\in \mathbf{T}_t\cap \mathbf{T}_p, \label{eq:operator-1}\\
\gamma(\Delta_{ijk})=\frac{\zeta_{i} - \zeta_{l}}{\zeta_{i} - \zeta_{k}}\Delta_{ijl} + \frac{\zeta_{l} - \zeta_{k}}{\zeta_{i} - \zeta_{k}}\Delta_{jkl}, \label{eq:operator-2}\\
\gamma(\Delta_{ikl})=\frac{\zeta_{i} - \zeta_{j}}{\zeta_{i} - \zeta_{k}}\Delta_{ijl} + \frac{\zeta_{j} - \zeta_{k}}{\zeta_{i} - \zeta_{k}}\Delta_{jkl}, \label{eq:operator-3}
\end{gather}

\begin{remark}
The indices of each element $\Delta$ are arranged in lexicographic order. This makes it possible to order the elements of the bases lexicographically. For example, the element $\Delta_{ijk}$ ($i<j<k$) is placed before the element $\Delta_{i'j'k'}$ in the basis if $i < i'$; or if $i = i'$ and $j < j'$; or if $i = i'$, $j = j'$ and $k < k'$.
\end{remark}

The ordered basis of the vector space $\mathbb{Q}^{(\mathbf{T}_t)}$ will be denoted by $\mathbf{f}_t$. The matrix of the linear operator $\gamma_{\mathbb{Q}^{(\mathbf{T}_t)},\mathbb{Q}^{(\mathbf{T}_p)}}$ will be denoted by $A_{\mathbf{f}_t,\mathbf{f}_p}$. A flip inside a quadrilateral replaces two triangles by two others. Therefore, the bases $\mathbf{f}_t,\mathbf{f}_p$, if they are obtained from a single generator of the groupoid $\overset{\mathllap{abc}}{\mathcal{G}^{4}_{n+3}}$, differ from each other by four elements, from which one can uniquely recover the quadrilateral $\{i,j,k,l\}$ in which the flip occurs. If the remaining elements of the bases are clear from the context and the corresponding flip changes the diagonal $ik$ to the diagonal $jl$, then we shall denote the matrix $A_{\mathbf{f}_t,\mathbf{f}_p}$ more simply by $A_{ikjl}$. Such a matrix is defined by the following formula:

\begin{equation}
  A_{ikjl} = 
    \begin{pmatrix}
      * & ... & * & ... & * & ... & * \\
      \  & \  & \  & \vdots & \  & \  & \   \\
      * & ... & \frac{\zeta_i-\zeta_l}{\zeta_i-\zeta_k} & ... & \frac{\zeta_i-\zeta_j}{\zeta_i-\zeta_k} & ... & * \\
      \  & \  & \  & \vdots & \  & \  & \   \\
      * & ... & \frac{\zeta_l-\zeta_k}{\zeta_i-\zeta_k} & ... & \frac{\zeta_j-\zeta_k}{\zeta_i-\zeta_k} & ... & * \\
      \  & \  & \  & \vdots & \  & \  & \  \\
      * & ... & * & ... & * & ... & * \\
    \end{pmatrix},
  \label{equation:matrix_n_2}
\end{equation}
where $i,j,k,l \in \{1,2,\ldots,n+3\}$ and the {\em asterisk} is equal either to $0$ or to $1$ depending on the order of the vectors in the bases.

\begin{remark}\label{rem:dots_algo}
To construct this matrix, one must be able to determine uniquely the indices $i,j,k,l$ of the points in an arbitrary quadrilateral. The following algorithm is used for this purpose. The point $i$ is taken to be the point with the smallest index. Then the point $k$ is determined automatically as the second endpoint of the diagonal $ik$. As the point $j$, one may choose the point with the smaller index among the two remaining points. The point $l$ is then the remaining point. If the flip occurring in the quadrilateral is $ik\rightarrow jl$, then we simply substitute the indices obtained in this way and the values of the corresponding variables into formula (\ref{equation:matrix_n_2}). If the flip is $jl \rightarrow ik$, then in formula (\ref{equation:matrix_n_2}) one must replace the index $i$ by the index $j$ and the index $k$ by the index $l$, while also interchanging the corresponding variables. For example, if the vertices of the quadrilateral are arranged in the plane in the order $(5,3,1,8)$, then the indices are as follows: $i=1$, $j=3$, $k=5$, and $l=8$. In this case, the choice of indices does not depend on whether the vertices of the quadrilateral are arranged in the plane clockwise or counterclockwise.
\end{remark}

We construct a map $f_{n+3}:\overset{\mathllap{abc}}{\mathcal{G}^{4}_{n+3}} \rightarrow \text{GL}_{2n+1}(\mathbb{Q})$. For each generator $g^{H_i}_{H_j}\in \overset{\mathllap{abc}}{\mathcal{G}^{4}_{n+3}}$, one can construct a linear map by formulas (\ref{eq:operator-1})--(\ref{eq:operator-2}) denoted by $\gamma^{\mathbf{f}_i}_{\mathbf{f}_j}$, which maps the vector space with basis $\mathbf{f}_i$ to the vector space with basis $\mathbf{f}_j$ and is determined by its matrix $A^{H_i}_{H_j} \in \text{GL}_{2n+1}(\mathbb{Q})$ according to formula (\ref{equation:matrix_n_2}). Thus, to each word in $\overset{\mathllap{abc}}{\mathcal{G}^{4}_{n+3}}$, one can assign a composition of such linear maps according to the rule:
$$
g^{H_p}_{H_q}g^{H_q}_{H_r}\cdots g^{H_t}_{H_u}g^{H_u}_{H_v} \mapsto \gamma^{\mathbf{f}_u}_{\mathbf{f}_v}\gamma^{\mathbf{f}_t}_{\mathbf{f}_u}\cdots\gamma^{\mathbf{f}_q}_{\mathbf{f}_r}\gamma^{\mathbf{f}_p}_{\mathbf{f}_q}(\mathbf{f}_p).
$$

This composition can be rewritten in matrix form as follows:
$$
\gamma^{\mathbf{f}_u}_{\mathbf{f}_v}\gamma^{\mathbf{f}_t}_{\mathbf{f}_u}\cdots\gamma^{\mathbf{f}_q}_{\mathbf{f}_r}\gamma^{\mathbf{f}_p}_{\mathbf{f}_q}(\mathbf{f}_p) = \mathbf{f}_u A^{H_u}_{H_v}A^{H_t}_{H_u}\cdots A^{H_q}_{H_r}A^{H_p}_{H_q}.
$$

We define the map $f_{n+3}$ so that it assigns to each word in $\overset{\mathllap{abc}}{\mathcal{G}^{4}_{n+3}}$ the product of matrices given by the following formula:
\begin{equation}\label{eq:h}
f_{n+3}(g^{H_p}_{H_q}g^{H_q}_{H_r}\cdots g^{H_t}_{H_u}g^{H_u}_{H_v}) = A^{H_u}_{H_v}A^{H_t}_{H_u}\cdots A^{H_q}_{H_r}A^{H_p}_{H_q}.
\end{equation}

\begin{theorem}
The map $f_{n+3}$ is a homomorphism.
\end{theorem}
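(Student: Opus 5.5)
To prove that $f_{n+3}$ is a homomorphism, it suffices to check that the assignment $g^{H_i}_{H_j}\mapsto A^{H_i}_{H_j}$ respects the defining relations \ref{rel:identity}--\ref{rel:far_commute} of $\overset{\mathllap{abc}}{\mathcal{G}^{4}_{n+3}}$. Compatibility with the multiplication (\ref{eq:flip_operation}) is built into formula (\ref{eq:h}): the image of a concatenated word is the product of images, in reversed order, which is consistent because composition of linear maps $\gamma$ is associative and the bases $\mathbf{f}$ match at the junctions. So a well-defined map on the free groupoid descends to $\overset{\mathllap{abc}}{\mathcal{G}^{4}_{n+3}}$ once each relation is sent to an identity of operators. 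Every relation involves only triangles inside a quadrilateral or pentagon, and $\gamma$ acts as the identity on all other triangles by (\ref{eq:operator-1}). Hence each check reduces to a computation on a space of dimension at most $3$, spanned by the triangles of one pentagon (or two disjoint quadrilaterals).

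Relation \ref{rel:identity} is immediate: if $H_i=H_i$, then every triangle is common and $\gamma$ is the identity. For relation \ref{rel:4_cycle}, fix the quadrilateral $(i,j,k,l)$. The matrix of $ik\rightarrow jl$ is the $2\times 2$ block
$$\frac{1}{\zeta_i-\zeta_k}\begin{pmatrix}\zeta_i-\zeta_l & \zeta_i-\zeta_j\\ \zeta_l-\zeta_k & \zeta_j-\zeta_k\end{pmatrix}.$$
The reverse flip $jl\rightarrow ik$ is given by the same formula with $i\leftrightarrow j$ and $k\leftrightarrow l$, as prescribed in Remark \ref{rem:dots_algo}. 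I will multiply the two blocks and check that the product is the identity. A cleaner way to organise this: interpret $\Delta_{pqr}$ as an affine functional, namely the coefficient vector expressing, via the $\zeta$'s, the relation $\sum(\text{barycentric weights})=1$. One checks that $\gamma(\Delta_{ijk})$ is the unique combination of $\Delta_{ijl},\Delta_{jkl}$ agreeing with $\Delta_{ijk}$ when the $\zeta$'s are regarded as points of a line. Inverse flips then automatically give inverse maps.

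Relation \ref{rel:far_commute} concerns two flips in quadrilaterals sharing at most an edge. Their triangles are disjoint, so the two matrices act on complementary coordinate blocks and commute, provided the intermediate bases are identified consistently. This consistency holds because each basis is ordered lexicographically and $\gamma$ is defined on basis vectors, not on positions.

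The main obstacle is the pentagon relation \ref{rel:5_cycle}. Here I would fix the pentagon with vertices $a_1,\dots,a_5$ and its five triangulations, each consisting of three triangles. I would write the five $3\times 3$ blocks with the index conventions of Remark \ref{rem:dots_algo}, and verify that their product is $I_3$. Rather than brute-force multiplication, I would first try the functional interpretation above. Each $\gamma$ is the restriction of a single canonical identification: every triangulation's span maps isomorphically onto the same ambient space (e.g.\ the space of functions generated by $\Delta$'s modulo the flip relations), and $\gamma$ is the transition map between these isomorphic copies. Any closed cycle of transitions then composes to the identity. If that structural argument does not close cleanly, I will fall back on a direct symbolic computation of the $3\times3$ product in $\mathbb{Q}(\zeta_{a_1},\dots,\zeta_{a_5})$, checking the cyclic case by symmetry of the formula under relabelling.
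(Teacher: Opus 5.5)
Your skeleton is the paper's: define $f_{n+3}$ on generators via Remark~\ref{rem:dots_algo}, then check I--IV on matrices. The difference is that the paper disposes of II--IV in one line by citing the proof of Lemma~4.1 in \cite{Rohozhkin}, while you verify them directly.

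\textbf{Relations II and IV.} Your $2\times 2$ product for II is indeed $I_2$. The disjoint-support argument for IV is fine. There the relevant space is $2+2$-dimensional rather than of dimension at most $3$, but that costs nothing.

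\textbf{Relation III, computational fallback.} The symbolic-computation fallback is sound, and your appeal to relabelling symmetry is legitimate. The formulas for $\gamma(\Delta_{ijk})$ and $\gamma(\Delta_{ikl})$ are invariant under $i\leftrightarrow k$ and under $j\leftrightarrow l$. So the block in (\ref{equation:matrix_n_2}) depends only on the removed and the inserted diagonal, and one generic computation covers all labellings. This is unlike $f'_{n+3}$, where the paper has to enumerate $12$ cases.

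\textbf{The structural route.} As you phrase it, this is the one weak spot. Taking the span of all $\Delta$'s modulo the flip relations as the ambient space is circular. Injectivity of each triangulation's span into that quotient is equivalent to the cycle relations you are trying to prove.

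A non-circular model is what your barycentric remark is reaching for. Send $\Delta_{pqr}$ to the divided-difference functional $D_{pqr}\colon f\mapsto f[\zeta_p,\zeta_q,\zeta_r]$. From $f[S\cup\{x\}]-f[S\cup\{y\}]=(x-y)f[S\cup\{x,y\}]$ one gets
\[
D_{ijk}=\tfrac{\zeta_i-\zeta_l}{\zeta_i-\zeta_k}D_{ijl}+\tfrac{\zeta_l-\zeta_k}{\zeta_i-\zeta_k}D_{jkl}
\quad\text{and}\quad
D_{ikl}=\tfrac{\zeta_i-\zeta_j}{\zeta_i-\zeta_k}D_{ijl}+\tfrac{\zeta_j-\zeta_k}{\zeta_i-\zeta_k}D_{jkl}.
\]
These are exactly the defining formulas of $\gamma$.

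The three $D$'s of any triangulation of a pentagon are linearly independent. In a fan $v_1v_2v_3$, $v_1v_3v_4$, $v_1v_4v_5$, the vertices $v_2$ and $v_5$ each lie in only one triangle. Hence these $D$'s form a basis of the $3$-dimensional space of functionals on the five points that vanish on affine functions. So every flip block is a change-of-basis matrix in one fixed space, and II and III hold with no computation.

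This works only after your localisation to the pentagon. Globally, the $2n+1$ triangles map into a space of dimension $n+1$, so the map cannot be injective.
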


\begin{proof}
To prove the theorem, it is necessary to define the map $f_{n+3}$ on the generators of the groupoid $\overset{\mathllap{abc}}{\mathcal{G}^{4}_{n+3}}$ and to show that the relations of this groupoid hold for the corresponding matrices.

The map is defined on the generators of the groupoid by the algorithm described in Remark \ref{rem:dots_algo}. The identity element is mapped to the identity matrix $I_{2n+1}$. Relation \ref{rel:identity} is trivial. The validity of relations \ref{rel:4_cycle}--\ref{rel:far_commute} for these matrices was proved in \cite{Rohozhkin}, in the proof of Lemma 4.1.
\end{proof}

\subsection{A representation of $\overset{\mathllap{abc}}{\mathcal{G}^{4}_{n+3}}$ by orthogonal matrices}

In \cite{Korepanov-pentagon}, an example of the pentagon equation in orthogonal matrices is given, which has the following form:
\begin{gather}
\left(\begin{array}{rrr}
1 & 0 & 0 \\
0 & \cos{\phi_{1345}} & \sin{\phi_{1345}} \\
0 & -\sin{\phi_{1345}} & \cos{\phi_{1345}}
\end{array}\right)
\left(\begin{array}{rrr}
\cos{\phi_{1234}} & -\sin{\phi_{1234}} & 0 \\
\sin{\phi_{1234}} & \cos{\phi_{1234}} & 0 \\
0 & 0 & 1
\end{array}\right) \notag \\
= \left(\begin{array}{rrr}\label{eq:pentagon_korep}
\cos{\phi_{1235}} & -\sin{\phi_{1235}} & 0 \\
\sin{\phi_{1235}} & \cos{\phi_{1235}} & 0 \\
0 & 0 & 1
\end{array}\right)
\left(\begin{array}{rrr}
1 & 0 & 0 \\
0 & \cos{\phi_{2345}} & \sin{\phi_{2345}} \\
0 & -\sin{\phi_{2345}} & \cos{\phi_{2345}}
\end{array}\right) \\
\cdot \left(\begin{array}{rrr}
\cos{\phi_{2345}} & 0 & \sin{\phi_{2345}} \\
0 & 1 & 0 \\
-\sin{\phi_{2345}} & 0 & \cos{\phi_{2345}}
\end{array}\right),\notag 
\end{gather}
where $\cos{\phi_{ijkl}}=\sqrt{\frac{\zeta_{il}\zeta_{jk}}{\zeta_{ik}\zeta_{jl}}}$ and $\sin{\phi_{ijkl}}=\sqrt{\frac{\zeta_{ij}\zeta_{kl}}{\zeta_{ik}\zeta_{jl}}}$ ($\zeta_{ij} = \zeta_i - \zeta_j$, $i,j \in \{1,\dots,5\}$), with all square roots taken to be positive and the following constraint must also be satisfied: $\zeta_1 > \zeta_2 > \zeta_3 > \zeta_4 > \zeta_5$.

By direct enumeration and computation, one can verify that equation (\ref{eq:pentagon_korep}) holds if the variables are ordered not only in decreasing order but also in other orders. However, it does not hold for every order.

We adapt these matrices in order to obtain a representation of the groupoid $\overset{\mathllap{abc}}{\mathcal{G}^{4}_{n+3}}$. Since, in the dynamical systems under consideration, the points corresponding to the strands of a braid may change their relative positions arbitrarily, we need equation (\ref{eq:pentagon_korep}) to hold for any permutation of the points.

In what follows, we repeat the analogous procedure described in the previous subsection. To each point we assign a variable $\zeta_i \in \mathbb{C}$, $i \in \{1,\dots,n+3\}$. All these variables must be pairwise distinct so that no division by zero occurs. We shall use the same notation for triangles, sets of triangles and basis vectors of vector spaces. To each triangle we assign a vector and to the whole triangulation we assign the vector space $\mathbb{C}^{\mathbf{T}}$ over the field $\mathbb{C}$.

We now extend the operators from \cite{Korepanov-pentagon}, whose matrices give equation (\ref{eq:pentagon_korep}). Without loss of generality, up to rotations, consider all possible quadrilaterals in the plane in which flips may occur; see Fig. \ref{fig:quadrilaterials}.
\begin{figure}[h]
    \centering
    \includegraphics[width=0.6\textwidth]{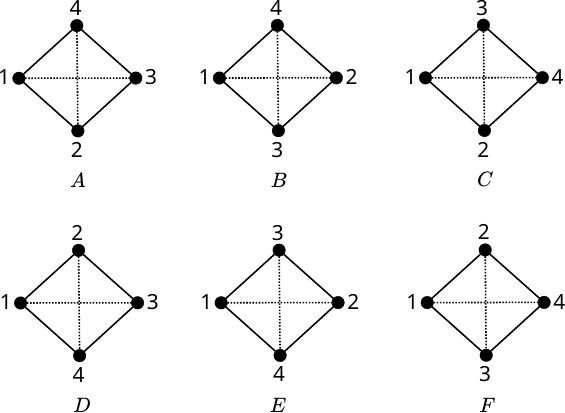}
    \caption{Possible quadrilaterals in the plane up to rotations; the numbers near the vertices correspond to the values of the variables, without loss of generality.}
    \label{fig:quadrilaterials}
\end{figure}

To a generator $g^{H_t}_{H_p}\in \overset{\mathllap{abc}}{\mathcal{G}^{4}_{n+3}}$, we assign a linear operator $\gamma_{\mathbb{C}^{\mathbf{T}_t},\mathbb{C}^{\mathbf{T}_p}}:\mathbb{C}^{\mathbf{T}_t}\rightarrow \mathbb{C}^{\mathbf{T}_p}$ defined by
\begin{gather}
\gamma(\Delta)=\Delta \mbox{ for all } \Delta\in \mathbf{T}_t\cap \mathbf{T}_p, \label{eq:operator-1}\\
\gamma(\Delta_{ijk})=\cos{\phi_{ijkl}}\Delta_{ijl} - \sin{\phi_{ijkl}}\Delta_{jkl}, \label{eq:operator-2}\\
\gamma(\Delta_{ikl})=\sin{\phi_{ijkl}}\Delta_{ijl} + \cos{\phi_{ijkl}}\Delta_{jkl}, \label{eq:operator-3}
\end{gather}
where $\cos{\phi_{ijkl}}=(-1)^{s}\sqrt{\frac{\zeta_{il}\zeta_{jk}}{\zeta_{ik}\zeta_{jl}}}$ and $\sin{\phi_{ijkl}}=(-1)^{r}\sqrt{\frac{\zeta_{ij}\zeta_{kl}}{\zeta_{ik}\zeta_{jl}}}$,
$$
s=\begin{cases} 1\ \text{for quadrilateral B,} \\ 0\ \text{otherwise,} \end{cases}
$$
and
$$
r=\begin{cases} 1\ \text{for quadrilateral C,} \\ 0\ \text{otherwise,} \end{cases}
$$
$\zeta_{uv}=\zeta_u-\zeta_v$ $(u,v \in \{1,\dots,n+3\})$, and all square roots are taken to be positive.

A generator $g^{H_t}_{H_p}\in \overset{\mathllap{abc}}{\mathcal{G}^{4}_{n+3}}$ uniquely determines the corresponding flip. If it is clear from the context which triangles are replaced by the flip $ik\rightarrow jl$, then the matrix of the corresponding linear map will be denoted by $A_{ikjl}$. This matrix has the following form:
\begin{equation}
\label{eq:ik_jl_matrix}
A_{ikjl} = \
    \begin{pmatrix}
* & \dots & * & \dots & * & \dots & * \\
& & & \vdots & & & \\
* & \dots & \cos{\phi_{ijkl}} & \dots & \sin{\phi_{ijkl}} & \dots & * \\
& & & \vdots & & & \\
* & \dots & -\sin{\phi_{ijkl}} & \dots & \cos{\phi_{ijkl}} & \dots & * \\
& & & \vdots & & & \\
* & \dots & * & \dots & * & \dots & *
    \end{pmatrix},
\end{equation}
where the symbol $*$ is equal either to $0$ or to $1$, depending on the positions of the triangles in the bases $\mathbf{f}_t$ and $\mathbf{f}_p$.

If the element $g^{H_t}_{H_p}\in \overset{\mathllap{abc}}{\mathcal{G}^{4}_{n+3}}$ corresponds to the flip $ik\rightarrow jl$, then the element $(g^{H_t}_{H_p})^{-1}\in \overset{\mathllap{abc}}{\mathcal{G}^{4}_{n+3}}$ corresponds to the flip $jl\rightarrow ik$. We therefore assign to it the linear map with matrix $A_{jlik}$, equal to the inverse matrix defined by formula (\ref{eq:ik_jl_matrix}):
\begin{equation}
\label{eq:jl_ik_matrix}
A_{jlik} = A_{ikjl}^{-1}.
\end{equation}

\begin{remark}\label{rem:alg_2}
In this case, the algorithm for determining the indices $i,j,k,l$ of the points in an arbitrary quadrilateral depends on the values of the variables associated with its vertices. We shall always list the vertices of the quadrilateral counterclockwise. The vertex with index $i$ is taken to be the vertex whose variable has the smallest value. The vertex $k$ is then determined automatically by the diagonal $ik$. The vertex $j$ is the vertex with the smaller variable among the two remaining vertices and the vertex $l$ is the remaining vertex. For example, suppose that a quadrilateral with vertices $(\zeta_s,\zeta_t,\zeta_u,\zeta_v)$ is placed in the plane, and that $\zeta_u < \zeta_s < \zeta_v < \zeta_t$. Then the vertex $i$ is the vertex with variable $\zeta_u$, the vertex $k$ is the vertex with variable $\zeta_s$, the vertex $j$ is the vertex with variable $\zeta_v$, and the vertex $l$ is the vertex with variable $\zeta_t$. Such a quadrilateral corresponds to the quadrilateral shown in Fig. \ref{fig:quadrilaterials}, B. For the quadrilateral $(\zeta_v,\zeta_u,\zeta_t,\zeta_s)$, the vertices have the same indices, but the embedding of this quadrilateral corresponds to the quadrilateral in Fig. \ref{fig:quadrilaterials}, E. Therefore, the operators defined in formulas (\ref{eq:operator-2})--(\ref{eq:operator-3}) differ in the first and second cases.
\end{remark}

By analogy with the map (\ref{eq:h}), we construct a map $f'_{n+3}:\overset{\mathllap{abc}}{\mathcal{G}^{4}_{n+3}} \rightarrow \text{GL}_{2n+1}(\mathbb{C})$ according to the rule:
\begin{equation}\label{eq:wh}
{f}'_{n+3}(g^{H_p}_{H_q}g^{H_q}_{H_r}\dots g^{H_t}_{H_u}g^{H_u}_{H_v}) = A^{H_u}_{H_v}A^{H_t}_{H_u}\cdots A^{H_q}_{H_r}A^{H_p}_{H_q}.
\end{equation}

\begin{theorem}
The map ${f}'_{n+3}$ is a homomorphism.
\end{theorem}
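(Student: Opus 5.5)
As for $f_{n+3}$, the plan is to define ${f}'_{n+3}$ on generators and check that the images of relations \ref{rel:identity}--\ref{rel:far_commute} hold as matrix identities. On a generator $g^{H_t}_{H_p}$ we use the matrix $A_{ikjl}$ of (\ref{eq:ik_jl_matrix}), with $i,j,k,l$ fixed by the algorithm of Remark \ref{rem:alg_2}. On its inverse we use $A_{jlik}=A_{ikjl}^{-1}$ from (\ref{eq:jl_ik_matrix}). Words are sent to reversed products as in (\ref{eq:wh}).

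\textbf{Relations I, II and IV.} Relation \ref{rel:identity} is immediate, since the empty flip gives $I_{2n+1}$.

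Relation \ref{rel:4_cycle} first needs a consistency check. The nontrivial $2\times 2$ block of $A_{ikjl}$ is a rotation, because $\cos^2\phi_{ijkl}+\sin^2\phi_{ijkl}=1$. This reduces to the cross-ratio identity
$$\zeta_{il}\zeta_{jk}+\zeta_{ij}\zeta_{kl}=\zeta_{ik}\zeta_{jl},$$
which holds for any signs $(-1)^s,(-1)^r$. So $A_{ikjl}$ is orthogonal and invertible, and its inverse is its transpose. We must then check that this transpose agrees with what the algorithm of Remark \ref{rem:alg_2} produces when it is applied directly to the reverse flip $jl\to ik$ in the same quadrilateral. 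Once that agreement is established, relation \ref{rel:4_cycle} holds by construction.

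Relation \ref{rel:far_commute} involves flips in two quadrilaterals that share at most an edge. Each matrix acts as the identity outside the two triangles it replaces. The two replaced pairs of triangles are disjoint, so the two block operators act on disjoint coordinates and commute.

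\textbf{Relation III: the pentagon.} This is the main step. Restricted to the three triangles of the pentagon, relation \ref{rel:5_cycle} becomes a $3\times 3$ identity of the form (\ref{eq:pentagon_korep}), where each factor is a rotation in the appropriate coordinate plane. Korepanov's identity \cite{Korepanov-pentagon} gives this when $\zeta_1>\zeta_2>\zeta_3>\zeta_4>\zeta_5$ and all roots are positive.

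For a general pentagon, I will relabel its vertices by the rank of their $\zeta$-values, taking them counterclockwise as in Remark \ref{rem:alg_2}. Up to rotation, this leaves finitely many cyclic orderings of the five ranks. Each of the five flips of the pentagon takes place in a quadrilateral of one of the types in Fig.~\ref{fig:quadrilaterials}, and its sign pattern $(s,r)$ is determined by that type. In each case I will check that the signs are exactly those needed to convert the ordered identity into the identity for that ordering. Equivalently, I will show that each sign change amounts to conjugating by a diagonal matrix with entries $\pm1$, or to reversing a rotation direction, and that these changes cancel around the cycle. Doing this for all $4!=24$ rotation classes by direct computation, or by a symbolic check of the squared entries together with a sign bookkeeping argument, finishes the proof.

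The main obstacle is this sign bookkeeping. The squares of the entries satisfy the pentagon identity automatically, by cross-ratio identities valid for any $\zeta$, but the square roots can introduce sign errors. I will first try to attach to each type in Fig.~\ref{fig:quadrilaterials} a sign character $\epsilon(\Delta)\in\{\pm1\}$ on triangles, so that every $A_{ikjl}$ becomes $D_pA^{0}D_t$ with $A^{0}$ an ordered Korepanov matrix and $D_t,D_p$ diagonal sign matrices. The $D$'s would then telescope in any cycle, which reduces relation \ref{rel:5_cycle} to the ordered case. If no such $\epsilon$ exists globally, I will fall back on the explicit case enumeration.
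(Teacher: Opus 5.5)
Your proposal is correct and follows essentially the same route as the paper: I and II hold by construction (your orthogonality check via $\zeta_{il}\zeta_{jk}+\zeta_{ij}\zeta_{kl}=\zeta_{ik}\zeta_{jl}$ makes explicit that $A_{jlik}=A_{ikjl}^{-1}=A_{ikjl}^{T}$), IV holds because the two flips act by block operators on disjoint pairs of triangles, and III is reduced to the $3\times 3$ pentagon block and settled by a finite direct (computer) check over the cyclic orderings of the $\zeta$-values, with your sign-gauge idea an optional shortcut rather than a needed ingredient. The only difference is the size of the enumeration: the paper checks $12$ cycles, identifying an ordering with its reversal, whereas your $4!=24$ classes up to rotation alone do not assume reflection is a symmetry — the safer choice, since by Remark~\ref{rem:alg_2} reflection exchanges quadrilateral types B and E, which carry different signs.
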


\begin{proof}
Using the algorithm in Remark \ref{rem:alg_2} and formula (\ref{eq:ik_jl_matrix}), we define the map ${f}'_{n+3}$ on the generators of the groupoid $\overset{\mathllap{abc}}{\mathcal{G}^{4}_{n+3}}$. The identity element is assigned the identity matrix $I_{2n+1}$. To the elements inverse to the generators, we assign the inverse matrices according to formula (\ref{eq:jl_ik_matrix}).

We prove that relations \ref{rel:identity}--\ref{rel:far_commute} hold for the matrices.

Relation \ref{rel:identity} is trivial, and relation \ref{rel:4_cycle} holds by construction.

Relation \ref{rel:5_cycle} for matrices of size $3\times 3$ is proved by computer enumeration and direct computation\footnote{It is sufficient to enumerate only those permutations that do not represent the same cycle; there are $\frac{(5-1)!}{2}=12$ such permutations. The matrix computations can be carried out using the computer algebra systems Sage and Maple.}. It follows from formulas (\ref{eq:operator-1})--(\ref{eq:operator-3}) that the nonzero coordinates of the vectors in the new basis depend only on the quadrilateral in which the flip occurs. Consequently, adding new vectors to the bases only enlarges the matrices by identity entries and does not affect the coordinates of the images of those vectors that participate in this relation. Thus, this pentagon relation holds for matrices of arbitrary size.

The proof of relation \ref{rel:far_commute} is analogous. The images of the vectors associated with quadrilaterals having at most one common edge do not depend on each other, nor do they depend on the presence or absence of other triangles in the triangulations, except for those participating in the flips from this relation. Therefore, it is sufficient to verify this relation by direct computation for triangulations consisting of two quadrilaterals; see Fig. \ref{fig:event_commutativity}.
\end{proof}

\subsection{Properties of the maps $f_{n+3}$ and ${f}'_{n+3}$}

The main difference between the maps $f_{n+3}$ and ${f}'_{n+3}$ is that the image of ${f}'_{n+3}$ consists of orthogonal matrices, whereas the matrices in the image of $f_{n+3}$ are not orthogonal.

\begin{figure}[h]
    \centering
    \includegraphics[width=0.15\textwidth]{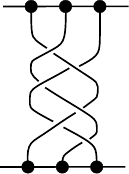}
    \caption{A braid whose closure is equivalent to the Borromean rings.}
    \label{fig:borromeo_link}
\end{figure}

The matrix invariants obtained using the maps $f_{n+3}$ and ${f}'_{n+3}$ are incomplete. As an example, one can compute the invariants of the braid shown in Fig. \ref{fig:borromeo_link} and verify that both invariants yield the identity matrix.

\section{Further directions}

The next step in this area is to construct a representation of the braid group $B_n$. To do this, it is necessary to modify the matrices obtained above in such a way that a single matrix corresponds to each generator $\sigma_i \in B_n$.

In \cite{Rohozhkin}, an idea for constructing an invariant of knots and links was also described as a further direction of research. However, the matrices corresponding to pure braids and obtained as products of the matrices defined by formulas (\ref{equation:matrix_n_2}), (\ref{eq:ik_jl_matrix}), and (\ref{eq:jl_ik_matrix}) have ones on the main diagonal and their determinants are always equal to $1$. Because of these properties, at the time of writing this paper, it has not been possible to obtain nontrivial invariants of knots and links. It may be that, in order to obtain a nontrivial result, one needs to deform the matrices described in this paper and consider the traces of the matrices corresponding to knots and links.

Another promising direction may be an attempt to describe the entire class of linear maps considered in this paper.

\section{Acknowledgments}

The author expresses special gratitude to Alexey V. Sleptsov for his indispensable assistance in writing this paper.

The author is also grateful to Igor Germanovich Korepanov, Igor M. Nikonov, Vasily O. Manturov, Kim Seongjeong, and Louis Kauffman for their constructive comments and discussions on this topic.

The author thanks Yuliia for her moral support during the preparation of this article.

\begin{appendices}
\section{Pentagon computation}\label{appendix:pentagon}

We construct the linear maps corresponding to the transformations of triangulations of a pentagon, as shown in Fig. \ref{fig:pentagon_korep_flips}. To the point with index $i$ we assign the variable $\zeta_i$.

\begin{figure}[h]
    \centering
    \includegraphics[width=0.6\textwidth]{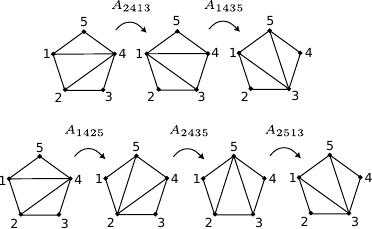}
    \caption{Transformations of a triangulation of a pentagon.}
    \label{fig:pentagon_korep_flips}
\end{figure}

$$
\gamma{(\Delta_{124},\Delta_{145},\Delta_{234})} = (\Delta_{123},\Delta_{134},\Delta_{145})
\left(\begin{array}{rrr}
\sqrt{{ \frac{{\left(\zeta_{1} - \zeta_{4}\right)} {\left(\zeta_{2} - \zeta_{3}\right)}}{{\left(\zeta_{1} - \zeta_{3}\right)} {\left(\zeta_{2} - \zeta_{4}\right)}} }} & 0 & -\sqrt{{ \frac{{\left(\zeta_{1} - \zeta_{2}\right)} {\left(\zeta_{3} - \zeta_{4}\right)}}{{\left(\zeta_{1} - \zeta_{3}\right)} {\left(\zeta_{2} - \zeta_{4}\right)}} }} \\
\sqrt{{ \frac{{\left(\zeta_{1} - \zeta_{2}\right)} {\left(\zeta_{3} - \zeta_{4}\right)}}{{\left(\zeta_{1} - \zeta_{3}\right)} {\left(\zeta_{2} - \zeta_{4}\right)}} }} & 0 & \sqrt{{ \frac{{\left(\zeta_{1} - \zeta_{4}\right)} {\left(\zeta_{2} - \zeta_{3}\right)}}{{\left(\zeta_{1} - \zeta_{3}\right)} {\left(\zeta_{2} - \zeta_{4}\right)}} }} \\
0 & 1 & 0
\end{array}\right),
$$

$$
\gamma{(\Delta_{123},\Delta_{134},\Delta_{145})} = (\Delta_{123},\Delta_{135},\Delta_{345})
\left(\begin{array}{rrr}
1 & 0 & 0 \\
0 & \sqrt{{ \frac{{\left(\zeta_{1} - \zeta_{5}\right)} {\left(\zeta_{3} - \zeta_{4}\right)}}{{\left(\zeta_{1} - \zeta_{4}\right)} {\left(\zeta_{3} - \zeta_{5}\right)}} }} & \sqrt{{ \frac{{\left(\zeta_{1} - \zeta_{3}\right)} {\left(\zeta_{4} - \zeta_{5}\right)}}{{\left(\zeta_{1} - \zeta_{4}\right)} {\left(\zeta_{3} - \zeta_{5}\right)}} }} \\
0 & -\sqrt{{ \frac{{\left(\zeta_{1} - \zeta_{3}\right)} {\left(\zeta_{4} - \zeta_{5}\right)}}{{\left(\zeta_{1} - \zeta_{4}\right)} {\left(\zeta_{3} - \zeta_{5}\right)}} }} & \sqrt{{ \frac{{\left(\zeta_{1} - \zeta_{5}\right)} {\left(\zeta_{3} - \zeta_{4}\right)}}{{\left(\zeta_{1} - \zeta_{4}\right)} {\left(\zeta_{3} - \zeta_{5}\right)}} }}
\end{array}\right),
$$

$$
\gamma{(\Delta_{124},\Delta_{145},\Delta_{234})} = (\Delta_{125},\Delta_{234},\Delta_{245})
\left(\begin{array}{rrr}
\sqrt{{ \frac{{\left(\zeta_{1} - \zeta_{5}\right)} {\left(\zeta_{2} - \zeta_{4}\right)}}{{\left(\zeta_{1} - \zeta_{4}\right)} {\left(\zeta_{2} - \zeta_{5}\right)}} }} & \sqrt{{ \frac{{\left(\zeta_{1} - \zeta_{2}\right)} {\left(\zeta_{4} - \zeta_{5}\right)}}{{\left(\zeta_{1} - \zeta_{4}\right)} {\left(\zeta_{2} - \zeta_{5}\right)}} }} & 0 \\
0 & 0 & 1 \\
-\sqrt{{ \frac{{\left(\zeta_{1} - \zeta_{2}\right)} {\left(\zeta_{4} - \zeta_{5}\right)}}{{\left(\zeta_{1} - \zeta_{4}\right)} {\left(\zeta_{2} - \zeta_{5}\right)}} }} & \sqrt{{ \frac{{\left(\zeta_{1} - \zeta_{5}\right)} {\left(\zeta_{2} - \zeta_{4}\right)}}{{\left(\zeta_{1} - \zeta_{4}\right)} {\left(\zeta_{2} - \zeta_{5}\right)}} }} & 0
\end{array}\right),
$$

$$
\gamma{(\Delta_{125},\Delta_{234},\Delta_{245})} = (\Delta_{125},\Delta_{235},\Delta_{345})
\left(\begin{array}{rrr}
1 & 0 & 0 \\
0 & \sqrt{{ \frac{{\left(\zeta_{2} - \zeta_{5}\right)} {\left(\zeta_{3} - \zeta_{4}\right)}}{{\left(\zeta_{2} - \zeta_{4}\right)} {\left(\zeta_{3} - \zeta_{5}\right)}} }} & \sqrt{{ \frac{{\left(\zeta_{2} - \zeta_{3}\right)} {\left(\zeta_{4} - \zeta_{5}\right)}}{{\left(\zeta_{2} - \zeta_{4}\right)} {\left(\zeta_{3} - \zeta_{5}\right)}} }} \\
0 & -\sqrt{{ \frac{{\left(\zeta_{2} - \zeta_{3}\right)} {\left(\zeta_{4} - \zeta_{5}\right)}}{{\left(\zeta_{2} - \zeta_{4}\right)} {\left(\zeta_{3} - \zeta_{5}\right)}} }} & \sqrt{{ \frac{{\left(\zeta_{2} - \zeta_{5}\right)} {\left(\zeta_{3} - \zeta_{4}\right)}}{{\left(\zeta_{2} - \zeta_{4}\right)} {\left(\zeta_{3} - \zeta_{5}\right)}} }}
\end{array}\right),
$$

$$
\gamma{(\Delta_{125},\Delta_{235},\Delta_{345})} = (\Delta_{123},\Delta_{135},\Delta_{345})
\left(\begin{array}{rrr}
\sqrt{{ \frac{{\left(\zeta_{1} - \zeta_{5}\right)} {\left(\zeta_{2} - \zeta_{3}\right)}}{{\left(\zeta_{1} - \zeta_{3}\right)} {\left(\zeta_{2} - \zeta_{5}\right)}} }} & -\sqrt{{ \frac{{\left(\zeta_{1} - \zeta_{2}\right)} {\left(\zeta_{3} - \zeta_{5}\right)}}{{\left(\zeta_{1} - \zeta_{3}\right)} {\left(\zeta_{2} - \zeta_{5}\right)}} }} & 0 \\
\sqrt{{ \frac{{\left(\zeta_{1} - \zeta_{2}\right)} {\left(\zeta_{3} - \zeta_{5}\right)}}{{\left(\zeta_{1} - \zeta_{3}\right)} {\left(\zeta_{2} - \zeta_{5}\right)}} }} & \sqrt{{ \frac{{\left(\zeta_{1} - \zeta_{5}\right)} {\left(\zeta_{2} - \zeta_{3}\right)}}{{\left(\zeta_{1} - \zeta_{3}\right)} {\left(\zeta_{2} - \zeta_{5}\right)}} }} & 0 \\
0 & 0 & 1
\end{array}\right).
$$

The relation for this pentagon has the following form:

\begin{gather*}
\left(\begin{array}{rrr}
1 & 0 & 0 \\
0 & \sqrt{{ \frac{{\left(\zeta_{1} - \zeta_{5}\right)} {\left(\zeta_{3} - \zeta_{4}\right)}}{{\left(\zeta_{1} - \zeta_{4}\right)} {\left(\zeta_{3} - \zeta_{5}\right)}} }} & \sqrt{{ \frac{{\left(\zeta_{1} - \zeta_{3}\right)} {\left(\zeta_{4} - \zeta_{5}\right)}}{{\left(\zeta_{1} - \zeta_{4}\right)} {\left(\zeta_{3} - \zeta_{5}\right)}} }} \\
0 & -\sqrt{{ \frac{{\left(\zeta_{1} - \zeta_{3}\right)} {\left(\zeta_{4} - \zeta_{5}\right)}}{{\left(\zeta_{1} - \zeta_{4}\right)} {\left(\zeta_{3} - \zeta_{5}\right)}} }} & \sqrt{{ \frac{{\left(\zeta_{1} - \zeta_{5}\right)} {\left(\zeta_{3} - \zeta_{4}\right)}}{{\left(\zeta_{1} - \zeta_{4}\right)} {\left(\zeta_{3} - \zeta_{5}\right)}} }}
\end{array}\right)
\left(\begin{array}{rrr}
\sqrt{{ \frac{{\left(\zeta_{1} - \zeta_{4}\right)} {\left(\zeta_{2} - \zeta_{3}\right)}}{{\left(\zeta_{1} - \zeta_{3}\right)} {\left(\zeta_{2} - \zeta_{4}\right)}} }} & 0 & -\sqrt{{ \frac{{\left(\zeta_{1} - \zeta_{2}\right)} {\left(\zeta_{3} - \zeta_{4}\right)}}{{\left(\zeta_{1} - \zeta_{3}\right)} {\left(\zeta_{2} - \zeta_{4}\right)}} }} \\
\sqrt{{ \frac{{\left(\zeta_{1} - \zeta_{2}\right)} {\left(\zeta_{3} - \zeta_{4}\right)}}{{\left(\zeta_{1} - \zeta_{3}\right)} {\left(\zeta_{2} - \zeta_{4}\right)}} }} & 0 & \sqrt{{ \frac{{\left(\zeta_{1} - \zeta_{4}\right)} {\left(\zeta_{2} - \zeta_{3}\right)}}{{\left(\zeta_{1} - \zeta_{3}\right)} {\left(\zeta_{2} - \zeta_{4}\right)}} }} \\
0 & 1 & 0
\end{array}\right) \\
= \left(\begin{array}{rrr}
\sqrt{{ \frac{{\left(\zeta_{1} - \zeta_{5}\right)} {\left(\zeta_{2} - \zeta_{3}\right)}}{{\left(\zeta_{1} - \zeta_{3}\right)} {\left(\zeta_{2} - \zeta_{5}\right)}} }} & -\sqrt{{ \frac{{\left(\zeta_{1} - \zeta_{2}\right)} {\left(\zeta_{3} - \zeta_{5}\right)}}{{\left(\zeta_{1} - \zeta_{3}\right)} {\left(\zeta_{2} - \zeta_{5}\right)}} }} & 0 \\
\sqrt{{ \frac{{\left(\zeta_{1} - \zeta_{2}\right)} {\left(\zeta_{3} - \zeta_{5}\right)}}{{\left(\zeta_{1} - \zeta_{3}\right)} {\left(\zeta_{2} - \zeta_{5}\right)}} }} & \sqrt{{ \frac{{\left(\zeta_{1} - \zeta_{5}\right)} {\left(\zeta_{2} - \zeta_{3}\right)}}{{\left(\zeta_{1} - \zeta_{3}\right)} {\left(\zeta_{2} - \zeta_{5}\right)}} }} & 0 \\
0 & 0 & 1
\end{array}\right)
\left(\begin{array}{rrr}
1 & 0 & 0 \\
0 & \sqrt{{ \frac{{\left(\zeta_{2} - \zeta_{5}\right)} {\left(\zeta_{3} - \zeta_{4}\right)}}{{\left(\zeta_{2} - \zeta_{4}\right)} {\left(\zeta_{3} - \zeta_{5}\right)}} }} & \sqrt{{ \frac{{\left(\zeta_{2} - \zeta_{3}\right)} {\left(\zeta_{4} - \zeta_{5}\right)}}{{\left(\zeta_{2} - \zeta_{4}\right)} {\left(\zeta_{3} - \zeta_{5}\right)}} }} \\
0 & -\sqrt{{ \frac{{\left(\zeta_{2} - \zeta_{3}\right)} {\left(\zeta_{4} - \zeta_{5}\right)}}{{\left(\zeta_{2} - \zeta_{4}\right)} {\left(\zeta_{3} - \zeta_{5}\right)}} }} & \sqrt{{ \frac{{\left(\zeta_{2} - \zeta_{5}\right)} {\left(\zeta_{3} - \zeta_{4}\right)}}{{\left(\zeta_{2} - \zeta_{4}\right)} {\left(\zeta_{3} - \zeta_{5}\right)}} }}
\end{array}\right) \\
\cdot \left(\begin{array}{rrr}
\sqrt{{ \frac{{\left(\zeta_{1} - \zeta_{5}\right)} {\left(\zeta_{2} - \zeta_{4}\right)}}{{\left(\zeta_{1} - \zeta_{4}\right)} {\left(\zeta_{2} - \zeta_{5}\right)}} }} & \sqrt{{ \frac{{\left(\zeta_{1} - \zeta_{2}\right)} {\left(\zeta_{4} - \zeta_{5}\right)}}{{\left(\zeta_{1} - \zeta_{4}\right)} {\left(\zeta_{2} - \zeta_{5}\right)}} }} & 0 \\
0 & 0 & 1 \\
-\sqrt{{ \frac{{\left(\zeta_{1} - \zeta_{2}\right)} {\left(\zeta_{4} - \zeta_{5}\right)}}{{\left(\zeta_{1} - \zeta_{4}\right)} {\left(\zeta_{2} - \zeta_{5}\right)}} }} & \sqrt{{ \frac{{\left(\zeta_{1} - \zeta_{5}\right)} {\left(\zeta_{2} - \zeta_{4}\right)}}{{\left(\zeta_{1} - \zeta_{4}\right)} {\left(\zeta_{2} - \zeta_{5}\right)}} }} & 0
\end{array}\right).
\end{gather*}

\section{Computation of pure braids}

We give an example of computing pure braids from the relation of the pure braid group \cite{Manturov-Knots}:
$$
b_{ij}b_{kl}=b_{kl}b_{ij}.
$$

Figure \ref{fig:pure_braid_calc_example} shows the dynamical system corresponding to the case $i<k<l<j$.

\begin{figure}[h]
    \centering
    \includegraphics[width=0.8\textwidth]{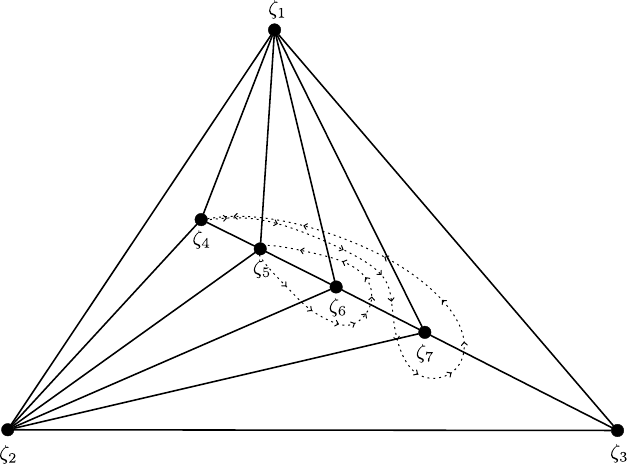}
    \caption{An example of a dynamical system; the arrows indicate the directions of motion of the point $\zeta_4$ of the braid $b_{ij}=b_{\zeta_4\zeta_7}$ and the point $\zeta_5$ of the braid $b_{kl}=b_{\zeta_5\zeta_6}$.}
    \label{fig:pure_braid_calc_example}
\end{figure}

In what follows, to simplify the computations, we assume that $\zeta_i=i$. We shall also round the entries of the matrices to several decimal places for ease of presentation.

The following product of matrices corresponds to the braid $b_{\zeta_4\zeta_7}$:
\begin{gather*}
f'_{n+3}(b_{\zeta_4\zeta_7}) = A_{b_{ij}} = A_{4615}A_{1524}A_{4716}A_{3417}A_{1645}A_{1746}\\ A_{2437}A_{3714}A_{4627}A_{2734}A_{1467}A_{6724}A_{4516}A_{1647}A_{2415}A_{1546}.
\end{gather*}

In matrix form, this product has the form:

\begin{gather*}
A_{b_{ij}} = 
\left(\begin{array}{rrrrrrrrr}
1.0 & 0.0 & 0.0 & 0.0 & 0.0 & 0.0 & 0.0 & 0.0 & 0.0 \\
0.0 & 1.0 & 0.0 & 0.0 & 0.0 & 0.0 & 0.0 & 0.0 & 0.0 \\
0.0 & 0.0 & 0.79056942 & 0.0 & 0.0 & 0.0 & 0.0 & 0.0 & -0.61237244 \\
0.0 & 0.0 & 0.61237244 & 0.0 & 0.0 & 0.0 & 0.0 & 0.0 & 0.79056942 \\
0.0 & 0.0 & 0.0 & 1.0 & 0.0 & 0.0 & 0.0 & 0.0 & 0.0 \\
0.0 & 0.0 & 0.0 & 0.0 & 1.0 & 0.0 & 0.0 & 0.0 & 0.0 \\
0.0 & 0.0 & 0.0 & 0.0 & 0.0 & 1.0 & 0.0 & 0.0 & 0.0 \\
0.0 & 0.0 & 0.0 & 0.0 & 0.0 & 0.0 & 1.0 & 0.0 & 0.0 \\
0.0 & 0.0 & 0.0 & 0.0 & 0.0 & 0.0 & 0.0 & 1.0 & 0.0
\end{array}\right) \\
\cdot \left(\begin{array}{rrrrrrrrr}
1.06066017 & 0.0 & -0.35355339i & 0.0 & 0.0 & 0.0 & 0.0 & 0.0 & 0.0 \\
0.0 & 1.0 & 0.0 & 0.0 & 0.0 & 0.0 & 0.0 & 0.0 & 0.0 \\
0.0 & 0.0 & 0.0 & 1.0 & 0.0 & 0.0 & 0.0 & 0.0 & 0.0 \\
0.0 & 0.0 & 0.0 & 0.0 & 1.0 & 0.0 & 0.0 & 0.0 & 0.0 \\
0.0 & 0.0 & 0.0 & 0.0 & 0.0 & 1.0 & 0.0 & 0.0 & 0.0 \\
0.35355339i & 0.0 & 1.06066017 & 0.0 & 0.0 & 0.0 & 0.0 & 0.0 & 0.0 \\
0.0 & 0.0 & 0.0 & 0.0 & 0.0 & 0.0 & 1.0 & 0.0 & 0.0 \\
0.0 & 0.0 & 0.0 & 0.0 & 0.0 & 0.0 & 0.0 & 1.0 & 0.0 \\
0.0 & 0.0 & 0.0 & 0.0 & 0.0 & 0.0 & 0.0 & 0.0 & 1.0
\end{array}\right)
\end{gather*}

\begin{gather*}
\cdot \left(\begin{array}{rrrrrrrrr}
1.0 & 0.0 & 0.0 & 0.0 & 0.0 & 0.0 & 0.0 & 0.0 & 0.0 \\
0.0 & 1.0 & 0.0 & 0.0 & 0.0 & 0.0 & 0.0 & 0.0 & 0.0 \\
0.0 & 0.0 & 1.0 & 0.0 & 0.0 & 0.0 & 0.0 & 0.0 & 0.0 \\
0.0 & 0.0 & 0.0 & 0.89442719 & 0.0 & 0.0 & 0.0 & 0.0 & -0.4472136 \\
0.0 & 0.0 & 0.0 & 0.4472136 & 0.0 & 0.0 & 0.0 & 0.0 & 0.89442719 \\
0.0 & 0.0 & 0.0 & 0.0 & 1.0 & 0.0 & 0.0 & 0.0 & 0.0 \\
0.0 & 0.0 & 0.0 & 0.0 & 0.0 & 1.0 & 0.0 & 0.0 & 0.0 \\
0.0 & 0.0 & 0.0 & 0.0 & 0.0 & 0.0 & 1.0 & 0.0 & 0.0 \\
0.0 & 0.0 & 0.0 & 0.0 & 0.0 & 0.0 & 0.0 & 1.0 & 0.0
\end{array}\right) \\
\cdot \left(\begin{array}{rrrrrrrrr}
1.0 & 0.0 & 0.0 & 0.0 & 0.0 & 0.0 & 0.0 & 0.0 & 0.0 \\
0.0 & 1.41421356 & 0.0 & 0.0 & 0.0 & 0.0 & -1.0i & 0.0 & 0.0 \\
0.0 & 0.0 & 1.0 & 0.0 & 0.0 & 0.0 & 0.0 & 0.0 & 0.0 \\
0.0 & 1.0i & 0.0 & 0.0 & 0.0 & 0.0 & 1.41421356 & 0.0 & 0.0 \\
0.0 & 0.0 & 0.0 & 1.0 & 0.0 & 0.0 & 0.0 & 0.0 & 0.0 \\
0.0 & 0.0 & 0.0 & 0.0 & 1.0 & 0.0 & 0.0 & 0.0 & 0.0 \\
0.0 & 0.0 & 0.0 & 0.0 & 0.0 & 1.0 & 0.0 & 0.0 & 0.0 \\
0.0 & 0.0 & 0.0 & 0.0 & 0.0 & 0.0 & 0.0 & 1.0 & 0.0 \\
0.0 & 0.0 & 0.0 & 0.0 & 0.0 & 0.0 & 0.0 & 0.0 & 1.0
\end{array}\right) \\
\cdot \left(\begin{array}{rrrrrrrrr}
1.0 & 0.0 & 0.0 & 0.0 & 0.0 & 0.0 & 0.0 & 0.0 & 0.0 \\
0.0 & 1.0 & 0.0 & 0.0 & 0.0 & 0.0 & 0.0 & 0.0 & 0.0 \\
0.0 & 0.0 & 1.26491106 & 0.77459667i & 0.0 & 0.0 & 0.0 & 0.0 & 0.0 \\
0.0 & 0.0 & 0.0 & 0.0 & 1.0 & 0.0 & 0.0 & 0.0 & 0.0 \\
0.0 & 0.0 & 0.0 & 0.0 & 0.0 & 1.0 & 0.0 & 0.0 & 0.0 \\
0.0 & 0.0 & 0.0 & 0.0 & 0.0 & 0.0 & 1.0 & 0.0 & 0.0 \\
0.0 & 0.0 & 0.0 & 0.0 & 0.0 & 0.0 & 0.0 & 1.0 & 0.0 \\
0.0 & 0.0 & -0.77459667i & 1.26491106 & 0.0 & 0.0 & 0.0 & 0.0 & 0.0 \\
0.0 & 0.0 & 0.0 & 0.0 & 0.0 & 0.0 & 0.0 & 0.0 & 1.0
\end{array}\right) \\
\cdot \left(\begin{array}{rrrrrrrrr}
1.0 & 0.0 & 0.0 & 0.0 & 0.0 & 0.0 & 0.0 & 0.0 & 0.0 \\
0.0 & 1.0 & 0.0 & 0.0 & 0.0 & 0.0 & 0.0 & 0.0 & 0.0 \\
0.0 & 0.0 & 1.11803399 & 0.0 & 0.5i & 0.0 & 0.0 & 0.0 & 0.0 \\
0.0 & 0.0 & 0.0 & 1.0 & 0.0 & 0.0 & 0.0 & 0.0 & 0.0 \\
0.0 & 0.0 & 0.0 & 0.0 & 0.0 & 1.0 & 0.0 & 0.0 & 0.0 \\
0.0 & 0.0 & 0.0 & 0.0 & 0.0 & 0.0 & 1.0 & 0.0 & 0.0 \\
0.0 & 0.0 & 0.0 & 0.0 & 0.0 & 0.0 & 0.0 & 1.0 & 0.0 \\
0.0 & 0.0 & 0.0 & 0.0 & 0.0 & 0.0 & 0.0 & 0.0 & 1.0 \\
0.0 & 0.0 & -0.5i & 0.0 & 1.11803399 & 0.0 & 0.0 & 0.0 & 0.0
\end{array}\right)
\end{gather*}

\begin{gather*}
\cdot \left(\begin{array}{rrrrrrrrr}
1.0 & 0.0 & 0.0 & 0.0 & 0.0 & 0.0 & 0.0 & 0.0 & 0.0 \\
0.0 & 1.0 & 0.0 & 0.0 & 0.0 & 0.0 & 0.0 & 0.0 & 0.0 \\
0.0 & 0.0 & 1.0 & 0.0 & 0.0 & 0.0 & 0.0 & 0.0 & 0.0 \\
0.0 & 0.0 & 0.0 & 1.0 & 0.0 & 0.0 & 0.0 & 0.0 & 0.0 \\
0.0 & 0.0 & 0.0 & 0.0 & 1.0 & 0.0 & 0.0 & 0.0 & 0.0 \\
0.0 & 0.0 & 0.0 & 0.0 & 0.0 & 0.79056942 & 0.61237244 & 0.0 & 0.0 \\
0.0 & 0.0 & 0.0 & 0.0 & 0.0 & 0.0 & 0.0 & 1.0 & 0.0 \\
0.0 & 0.0 & 0.0 & 0.0 & 0.0 & 0.0 & 0.0 & 0.0 & 1.0 \\
0.0 & 0.0 & 0.0 & 0.0 & 0.0 & -0.61237244 & 0.79056942 & 0.0 & 0.0
\end{array}\right) \\
\cdot \left(\begin{array}{rrrrrrrrr}
1.0 & 0.0 & 0.0 & 0.0 & 0.0 & 0.0 & 0.0 & 0.0 & 0.0 \\
0.0 & 0.70710678 & 0.0 & 0.0 & 0.0 & 0.0 & 0.0 & 0.0 & -0.70710678 \\
0.0 & 0.70710678 & 0.0 & 0.0 & 0.0 & 0.0 & 0.0 & 0.0 & 0.70710678 \\
0.0 & 0.0 & 1.0 & 0.0 & 0.0 & 0.0 & 0.0 & 0.0 & 0.0 \\
0.0 & 0.0 & 0.0 & 1.0 & 0.0 & 0.0 & 0.0 & 0.0 & 0.0 \\
0.0 & 0.0 & 0.0 & 0.0 & 1.0 & 0.0 & 0.0 & 0.0 & 0.0 \\
0.0 & 0.0 & 0.0 & 0.0 & 0.0 & 1.0 & 0.0 & 0.0 & 0.0 \\
0.0 & 0.0 & 0.0 & 0.0 & 0.0 & 0.0 & 1.0 & 0.0 & 0.0 \\
0.0 & 0.0 & 0.0 & 0.0 & 0.0 & 0.0 & 0.0 & 1.0 & 0.0
\end{array}\right) \\
\cdot \left(\begin{array}{rrrrrrrrr}
1.0 & 0.0 & 0.0 & 0.0 & 0.0 & 0.0 & 0.0 & 0.0 & 0.0 \\
0.0 & 1.0 & 0.0 & 0.0 & 0.0 & 0.0 & 0.0 & 0.0 & 0.0 \\
0.0 & 0.0 & 1.0 & 0.0 & 0.0 & 0.0 & 0.0 & 0.0 & 0.0 \\
0.0 & 0.0 & 0.0 & 1.0 & 0.0 & 0.0 & 0.0 & 0.0 & 0.0 \\
0.0 & 0.0 & 0.0 & 0.0 & 1.0 & 0.0 & 0.0 & 0.0 & 0.0 \\
0.0 & 0.0 & 0.0 & 0.0 & 0.0 & 1.09544512 & 0.0 & 0.0 & 0.4472136i \\
0.0 & 0.0 & 0.0 & 0.0 & 0.0 & 0.0 & 1.0 & 0.0 & 0.0 \\
0.0 & 0.0 & 0.0 & 0.0 & 0.0 & -0.4472136i & 0.0 & 0.0 & 1.09544512 \\
0.0 & 0.0 & 0.0 & 0.0 & 0.0 & 0.0 & 0.0 & 1.0 & 0.0
\end{array}\right) \\
\cdot \left(\begin{array}{rrrrrrrrr}
1.0 & 0.0 & 0.0 & 0.0 & 0.0 & 0.0 & 0.0 & 0.0 & 0.0 \\
0.0 & 1.0 & 0.0 & 0.0 & 0.0 & 0.0 & 0.0 & 0.0 & 0.0 \\
0.0 & 0.0 & 1.0 & 0.0 & 0.0 & 0.0 & 0.0 & 0.0 & 0.0 \\
0.0 & 0.0 & 0.0 & 1.0 & 0.0 & 0.0 & 0.0 & 0.0 & 0.0 \\
0.0 & 0.0 & 0.0 & 0.0 & 1.26491106 & 0.0 & -0.77459667i & 0.0 & 0.0 \\
0.0 & 0.0 & 0.0 & 0.0 & 0.0 & 1.0 & 0.0 & 0.0 & 0.0 \\
0.0 & 0.0 & 0.0 & 0.0 & 0.0 & 0.0 & 0.0 & 1.0 & 0.0 \\
0.0 & 0.0 & 0.0 & 0.0 & 0.77459667i & 0.0 & 1.26491106 & 0.0 & 0.0 \\
0.0 & 0.0 & 0.0 & 0.0 & 0.0 & 0.0 & 0.0 & 0.0 & 1.0
\end{array}\right)
\end{gather*}

\begin{gather*}
\cdot \left(\begin{array}{rrrrrrrrr}
1.0 & 0.0 & 0.0 & 0.0 & 0.0 & 0.0 & 0.0 & 0.0 & 0.0 \\
0.0 & 1.0 & 0.0 & 0.0 & 0.0 & 0.0 & 0.0 & 0.0 & 0.0 \\
0.0 & 0.0 & 0.0 & 0.0 & 1.0 & 0.0 & 0.0 & 0.0 & 0.0 \\
0.0 & 0.0 & -2.0i & 2.23606798 & 0.0 & 0.0 & 0.0 & 0.0 & 0.0 \\
0.0 & 0.0 & 0.0 & 0.0 & 0.0 & 1.0 & 0.0 & 0.0 & 0.0 \\
0.0 & 0.0 & 0.0 & 0.0 & 0.0 & 0.0 & 1.0 & 0.0 & 0.0 \\
0.0 & 0.0 & 0.0 & 0.0 & 0.0 & 0.0 & 0.0 & 1.0 & 0.0 \\
0.0 & 0.0 & 0.0 & 0.0 & 0.0 & 0.0 & 0.0 & 0.0 & 1.0 \\
0.0 & 0.0 & -2.23606798 & -2.0i & 0.0 & 0.0 & 0.0 & 0.0 & 0.0
\end{array}\right) \\
\cdot \left(\begin{array}{rrrrrrrrr}
1.0 & 0.0 & 0.0 & 0.0 & 0.0 & 0.0 & 0.0 & 0.0 & 0.0 \\
0.0 & 1.0 & 0.0 & 0.0 & 0.0 & 0.0 & 0.0 & 0.0 & 0.0 \\
0.0 & 0.0 & 1.0 & 0.0 & 0.0 & 0.0 & 0.0 & 0.0 & 0.0 \\
0.0 & 0.0 & 0.0 & 1.0 & 0.0 & 0.0 & 0.0 & 0.0 & 0.0 \\
0.0 & 0.0 & 0.0 & 0.0 & 1.0 & 0.0 & 0.0 & 0.0 & 0.0 \\
0.0 & 0.0 & 0.0 & 0.0 & 0.0 & 1.0 & 0.0 & 0.0 & 0.0 \\
0.0 & 0.0 & 0.0 & 0.0 & 0.0 & 0.0 & 0.0 & 2.23606798i & -2.44948974 \\
0.0 & 0.0 & 0.0 & 0.0 & 0.0 & 0.0 & 0.0 & 2.44948974 & 2.23606798i \\
0.0 & 0.0 & 0.0 & 0.0 & 0.0 & 0.0 & 1.0 & 0.0 & 0.0
\end{array}\right) \\
\cdot \left(\begin{array}{rrrrrrrrr}
1.0 & 0.0 & 0.0 & 0.0 & 0.0 & 0.0 & 0.0 & 0.0 & 0.0 \\
0.0 & 1.0 & 0.0 & 0.0 & 0.0 & 0.0 & 0.0 & 0.0 & 0.0 \\
0.0 & 0.0 & 1.26491106 & 0.0 & 0.0 & 0.0 & 0.0 & -0.77459667i & 0.0 \\
0.0 & 0.0 & 0.0 & 1.0 & 0.0 & 0.0 & 0.0 & 0.0 & 0.0 \\
0.0 & 0.0 & 0.77459667i & 0.0 & 0.0 & 0.0 & 0.0 & 1.26491106 & 0.0 \\
0.0 & 0.0 & 0.0 & 0.0 & 1.0 & 0.0 & 0.0 & 0.0 & 0.0 \\
0.0 & 0.0 & 0.0 & 0.0 & 0.0 & 1.0 & 0.0 & 0.0 & 0.0 \\
0.0 & 0.0 & 0.0 & 0.0 & 0.0 & 0.0 & 1.0 & 0.0 & 0.0 \\
0.0 & 0.0 & 0.0 & 0.0 & 0.0 & 0.0 & 0.0 & 0.0 & 1.0
\end{array}\right) \\
\cdot \left(\begin{array}{rrrrrrrrr}
1.0 & 0.0 & 0.0 & 0.0 & 0.0 & 0.0 & 0.0 & 0.0 & 0.0 \\
0.0 & 1.0 & 0.0 & 0.0 & 0.0 & 0.0 & 0.0 & 0.0 & 0.0 \\
0.0 & 0.0 & 1.0 & 0.0 & 0.0 & 0.0 & 0.0 & 0.0 & 0.0 \\
0.0 & 0.0 & 0.0 & 0.89442719 & 0.4472136 & 0.0 & 0.0 & 0.0 & 0.0 \\
0.0 & 0.0 & 0.0 & 0.0 & 0.0 & 1.0 & 0.0 & 0.0 & 0.0 \\
0.0 & 0.0 & 0.0 & 0.0 & 0.0 & 0.0 & 1.0 & 0.0 & 0.0 \\
0.0 & 0.0 & 0.0 & 0.0 & 0.0 & 0.0 & 0.0 & 1.0 & 0.0 \\
0.0 & 0.0 & 0.0 & 0.0 & 0.0 & 0.0 & 0.0 & 0.0 & 1.0 \\
0.0 & 0.0 & 0.0 & -0.4472136 & 0.89442719 & 0.0 & 0.0 & 0.0 & 0.0
\end{array}\right)
\end{gather*}

\begin{gather*}
\cdot \left(\begin{array}{rrrrrrrrr}
1.06066017 & 0.0 & 0.0 & 0.0 & 0.0 & 0.35355339i & 0.0 & 0.0 & 0.0 \\
0.0 & 1.0 & 0.0 & 0.0 & 0.0 & 0.0 & 0.0 & 0.0 & 0.0 \\
-0.35355339i & 0.0 & 0.0 & 0.0 & 0.0 & 1.06066017 & 0.0 & 0.0 & 0.0 \\
0.0 & 0.0 & 1.0 & 0.0 & 0.0 & 0.0 & 0.0 & 0.0 & 0.0 \\
0.0 & 0.0 & 0.0 & 1.0 & 0.0 & 0.0 & 0.0 & 0.0 & 0.0 \\
0.0 & 0.0 & 0.0 & 0.0 & 1.0 & 0.0 & 0.0 & 0.0 & 0.0 \\
0.0 & 0.0 & 0.0 & 0.0 & 0.0 & 0.0 & 1.0 & 0.0 & 0.0 \\
0.0 & 0.0 & 0.0 & 0.0 & 0.0 & 0.0 & 0.0 & 1.0 & 0.0 \\
0.0 & 0.0 & 0.0 & 0.0 & 0.0 & 0.0 & 0.0 & 0.0 & 1.0
\end{array}\right) \\
\cdot \left(\begin{array}{rrrrrrrrr}
1.0 & 0.0 & 0.0 & 0.0 & 0.0 & 0.0 & 0.0 & 0.0 & 0.0 \\
0.0 & 1.0 & 0.0 & 0.0 & 0.0 & 0.0 & 0.0 & 0.0 & 0.0 \\
0.0 & 0.0 & 0.79056942 & 0.61237244 & 0.0 & 0.0 & 0.0 & 0.0 & 0.0 \\
0.0 & 0.0 & 0.0 & 0.0 & 1.0 & 0.0 & 0.0 & 0.0 & 0.0 \\
0.0 & 0.0 & 0.0 & 0.0 & 0.0 & 1.0 & 0.0 & 0.0 & 0.0 \\
0.0 & 0.0 & 0.0 & 0.0 & 0.0 & 0.0 & 1.0 & 0.0 & 0.0 \\
0.0 & 0.0 & 0.0 & 0.0 & 0.0 & 0.0 & 0.0 & 1.0 & 0.0 \\
0.0 & 0.0 & 0.0 & 0.0 & 0.0 & 0.0 & 0.0 & 0.0 & 1.0 \\
0.0 & 0.0 & -0.61237244 & 0.79056942 & 0.0 & 0.0 & 0.0 & 0.0 & 0.0
\end{array}\right) \\
= \left(\begin{array}{rrrrrrrrr}
1.0 & -0.35355 & 0.0 & 0.0 & 1.11803 & 0.27386 & 0.0 & 0.0 & -1.09545 \\
0.35355 & 1.0 & -1.0 & 0.0 & 0.0 & 0.0 & -1.06066 & 0.0 & 0.0 \\
0.0 & 1.0 & 1.0 & 0.0 & -3.16228 & -0.7746 & 0.0 & 0.0 & 3.09839 \\
0.0 & 0.0 & 0.0 & 1.0 & 0.0 & 0.0 & 0.0 & 0.0 & 0.0 \\
-1.11803 & 0.0 & 3.16228 & 0.0 & 1.0 & 0.0 & -3.3541 & 0.0 & 0.0 \\
-0.27386 & 0.0 & 0.7746 & 0.0 & 0.0 & 1.0 & -0.82158 & 0.0 & 0.0 \\
0.0 & 1.06066 & 0.0 & 0.0 & 3.3541 & 0.82158 & 1.0 & 0.0 & 3.28634 \\
0.0 & 0.0 & 0.0 & 0.0 & 0.0 & 0.0 & 0.0 & 1.0 & 0.0 \\
1.09545 & 0.0 & -3.09839 & 0.0 & 0.0 & 0.0 & -3.28634 & 0.0 & 1.0
\end{array}\right).
\end{gather*}

The following product of matrices corresponds to the braid $b_{\zeta_5\zeta_6}$:
$$
f'_{n+3}(b_{\zeta_5\zeta_6}) = A_{b_{kl}} = A_{4625}A_{5716}A_{1645}A_{2567}A_{6715}A_{4526}A_{1546}A_{2657}.
$$

In matrix form, this product has the form:

\begin{gather*}
A_{b_{kl}} = \left(\begin{array}{rrrrrrrrr}
1.0 & 0.0 & 0.0 & 0.0 & 0.0 & 0.0 & 0.0 & 0.0 & 0.0 \\
0.0 & 1.0 & 0.0 & 0.0 & 0.0 & 0.0 & 0.0 & 0.0 & 0.0 \\
0.0 & 0.0 & 1.0 & 0.0 & 0.0 & 0.0 & 0.0 & 0.0 & 0.0 \\
0.0 & 0.0 & 0.0 & 1.0 & 0.0 & 0.0 & 0.0 & 0.0 & 0.0 \\
0.0 & 0.0 & 0.0 & 0.0 & 1.0 & 0.0 & 0.0 & 0.0 & 0.0 \\
0.0 & 0.0 & 0.0 & 0.0 & 0.0 & 1.0 & 0.0 & 0.0 & 0.0 \\
0.0 & 0.0 & 0.0 & 0.0 & 0.0 & 0.0 & 0.81649658 & 0.0 & -0.57735027 \\
0.0 & 0.0 & 0.0 & 0.0 & 0.0 & 0.0 & 0.57735027 & 0.0 & 0.81649658 \\
0.0 & 0.0 & 0.0 & 0.0 & 0.0 & 0.0 & 0.0 & 1.0 & 0.0
\end{array}\right) \\
\cdot \left(\begin{array}{rrrrrrrrr}
1.0 & 0.0 & 0.0 & 0.0 & 0.0 & 0.0 & 0.0 & 0.0 & 0.0 \\
0.0 & 1.0 & 0.0 & 0.0 & 0.0 & 0.0 & 0.0 & 0.0 & 0.0 \\
0.0 & 0.0 & 1.0 & 0.0 & 0.0 & 0.0 & 0.0 & 0.0 & 0.0 \\
0.0 & 0.0 & 0.0 & 0.77459667 & 0.0 & 0.0 & 0.0 & 0.0 & -0.63245553 \\
0.0 & 0.0 & 0.0 & 0.63245553 & 0.0 & 0.0 & 0.0 & 0.0 & 0.77459667 \\
0.0 & 0.0 & 0.0 & 0.0 & 1.0 & 0.0 & 0.0 & 0.0 & 0.0 \\
0.0 & 0.0 & 0.0 & 0.0 & 0.0 & 1.0 & 0.0 & 0.0 & 0.0 \\
0.0 & 0.0 & 0.0 & 0.0 & 0.0 & 0.0 & 1.0 & 0.0 & 0.0 \\
0.0 & 0.0 & 0.0 & 0.0 & 0.0 & 0.0 & 0.0 & 1.0 & 0.0
\end{array}\right) \\
\cdot \left(\begin{array}{rrrrrrrrr}
1.0 & 0.0 & 0.0 & 0.0 & 0.0 & 0.0 & 0.0 & 0.0 & 0.0 \\
0.0 & 1.0 & 0.0 & 0.0 & 0.0 & 0.0 & 0.0 & 0.0 & 0.0 \\
0.0 & 0.0 & 1.26491106 & -0.77459667i & 0.0 & 0.0 & 0.0 & 0.0 & 0.0 \\
0.0 & 0.0 & 0.0 & 0.0 & 1.0 & 0.0 & 0.0 & 0.0 & 0.0 \\
0.0 & 0.0 & 0.0 & 0.0 & 0.0 & 1.0 & 0.0 & 0.0 & 0.0 \\
0.0 & 0.0 & 0.0 & 0.0 & 0.0 & 0.0 & 1.0 & 0.0 & 0.0 \\
0.0 & 0.0 & 0.0 & 0.0 & 0.0 & 0.0 & 0.0 & 1.0 & 0.0 \\
0.0 & 0.0 & 0.77459667i & 1.26491106 & 0.0 & 0.0 & 0.0 & 0.0 & 0.0 \\
0.0 & 0.0 & 0.0 & 0.0 & 0.0 & 0.0 & 0.0 & 0.0 & 1.0
\end{array}\right) \\
\cdot \left(\begin{array}{rrrrrrrrr}
1.0 & 0.0 & 0.0 & 0.0 & 0.0 & 0.0 & 0.0 & 0.0 & 0.0 \\
0.0 & 1.0 & 0.0 & 0.0 & 0.0 & 0.0 & 0.0 & 0.0 & 0.0 \\
0.0 & 0.0 & 1.0 & 0.0 & 0.0 & 0.0 & 0.0 & 0.0 & 0.0 \\
0.0 & 0.0 & 0.0 & 1.0 & 0.0 & 0.0 & 0.0 & 0.0 & 0.0 \\
0.0 & 0.0 & 0.0 & 0.0 & 1.0 & 0.0 & 0.0 & 0.0 & 0.0 \\
0.0 & 0.0 & 0.0 & 0.0 & 0.0 & 1.0 & 0.0 & 0.0 & 0.0 \\
0.0 & 0.0 & 0.0 & 0.0 & 0.0 & 0.0 & 1.0 & 0.0 & 0.0 \\
0.0 & 0.0 & 0.0 & 0.0 & 0.0 & 0.0 & 0.0 & 1.29099445i & 1.63299316 \\
0.0 & 0.0 & 0.0 & 0.0 & 0.0 & 0.0 & 0.0 & -1.63299316 & 1.29099445i
\end{array}\right)
\end{gather*}

\begin{gather*}
\cdot \left(\begin{array}{rrrrrrrrr}
1.0 & 0.0 & 0.0 & 0.0 & 0.0 & 0.0 & 0.0 & 0.0 & 0.0 \\
0.0 & 1.0 & 0.0 & 0.0 & 0.0 & 0.0 & 0.0 & 0.0 & 0.0 \\
0.0 & 0.0 & 1.0 & 0.0 & 0.0 & 0.0 & 0.0 & 0.0 & 0.0 \\
0.0 & 0.0 & 0.0 & -1.22474487i & 0.0 & 0.0 & 0.0 & 0.0 & -1.58113883 \\
0.0 & 0.0 & 0.0 & 1.58113883 & 0.0 & 0.0 & 0.0 & 0.0 & -1.22474487i \\
0.0 & 0.0 & 0.0 & 0.0 & 1.0 & 0.0 & 0.0 & 0.0 & 0.0 \\
0.0 & 0.0 & 0.0 & 0.0 & 0.0 & 1.0 & 0.0 & 0.0 & 0.0 \\
0.0 & 0.0 & 0.0 & 0.0 & 0.0 & 0.0 & 1.0 & 0.0 & 0.0 \\
0.0 & 0.0 & 0.0 & 0.0 & 0.0 & 0.0 & 0.0 & 1.0 & 0.0
\end{array}\right) \\
\cdot \left(\begin{array}{rrrrrrrrr}
1.0 & 0.0 & 0.0 & 0.0 & 0.0 & 0.0 & 0.0 & 0.0 & 0.0 \\
0.0 & 1.0 & 0.0 & 0.0 & 0.0 & 0.0 & 0.0 & 0.0 & 0.0 \\
0.0 & 0.0 & 1.0 & 0.0 & 0.0 & 0.0 & 0.0 & 0.0 & 0.0 \\
0.0 & 0.0 & 0.0 & 1.0 & 0.0 & 0.0 & 0.0 & 0.0 & 0.0 \\
0.0 & 0.0 & 0.0 & 0.0 & 1.0 & 0.0 & 0.0 & 0.0 & 0.0 \\
0.0 & 0.0 & 0.0 & 0.0 & 0.0 & 1.22474487 & 0.0 & -0.70710678i & 0.0 \\
0.0 & 0.0 & 0.0 & 0.0 & 0.0 & 0.70710678i & 0.0 & 1.22474487 & 0.0 \\
0.0 & 0.0 & 0.0 & 0.0 & 0.0 & 0.0 & 1.0 & 0.0 & 0.0 \\
0.0 & 0.0 & 0.0 & 0.0 & 0.0 & 0.0 & 0.0 & 0.0 & 1.0
\end{array}\right) \\
\cdot \left(\begin{array}{rrrrrrrrr}
1.0 & 0.0 & 0.0 & 0.0 & 0.0 & 0.0 & 0.0 & 0.0 & 0.0 \\
0.0 & 1.0 & 0.0 & 0.0 & 0.0 & 0.0 & 0.0 & 0.0 & 0.0 \\
0.0 & 0.0 & 0.79056942 & 0.61237244 & 0.0 & 0.0 & 0.0 & 0.0 & 0.0 \\
0.0 & 0.0 & 0.0 & 0.0 & 1.0 & 0.0 & 0.0 & 0.0 & 0.0 \\
0.0 & 0.0 & 0.0 & 0.0 & 0.0 & 1.0 & 0.0 & 0.0 & 0.0 \\
0.0 & 0.0 & 0.0 & 0.0 & 0.0 & 0.0 & 1.0 & 0.0 & 0.0 \\
0.0 & 0.0 & 0.0 & 0.0 & 0.0 & 0.0 & 0.0 & 1.0 & 0.0 \\
0.0 & 0.0 & -0.61237244 & 0.79056942 & 0.0 & 0.0 & 0.0 & 0.0 & 0.0 \\
0.0 & 0.0 & 0.0 & 0.0 & 0.0 & 0.0 & 0.0 & 0.0 & 1.0
\end{array}\right) \\
\cdot \left(\begin{array}{rrrrrrrrr}
1.0 & 0.0 & 0.0 & 0.0 & 0.0 & 0.0 & 0.0 & 0.0 & 0.0 \\
0.0 & 1.0 & 0.0 & 0.0 & 0.0 & 0.0 & 0.0 & 0.0 & 0.0 \\
0.0 & 0.0 & 1.0 & 0.0 & 0.0 & 0.0 & 0.0 & 0.0 & 0.0 \\
0.0 & 0.0 & 0.0 & 1.0 & 0.0 & 0.0 & 0.0 & 0.0 & 0.0 \\
0.0 & 0.0 & 0.0 & 0.0 & 1.0 & 0.0 & 0.0 & 0.0 & 0.0 \\
0.0 & 0.0 & 0.0 & 0.0 & 0.0 & 1.0 & 0.0 & 0.0 & 0.0 \\
0.0 & 0.0 & 0.0 & 0.0 & 0.0 & 0.0 & 1.0 & 0.0 & 0.0 \\
0.0 & 0.0 & 0.0 & 0.0 & 0.0 & 0.0 & 0.0 & 0.79056942 & 0.61237244 \\
0.0 & 0.0 & 0.0 & 0.0 & 0.0 & 0.0 & 0.0 & -0.61237244 & 0.79056942
\end{array}\right)
\end{gather*}

\begin{gather*}
= \left(\begin{array}{rrrrrrrrr}
1.0 & 0.0 & 0.0 & 0.0 & 0.0 & 0.0 & 0.0 & 0.0 & 0.0 \\
0.0 & 1.0 & 0.0 & 0.0 & 0.0 & 0.0 & 0.0 & 0.0 & 0.0 \\
0.0 & 0.0 & 1.0 & 0.7746 & -0.94868 & 0.0 & 0.0 & -0.75 & 0.96825 \\
0.0 & 0.0 & -0.7746 & 1.0 & 1.22474 & 0.0 & 0.7303 & -0.06455 & -1.25 \\
0.0 & 0.0 & 0.94868 & -1.22474 & 1.0 & 0.0 & -0.89443 & 1.26491 & 0.0 \\
0.0 & 0.0 & 0.0 & 0.0 & 0.0 & 1.0 & 0.0 & 0.0 & 0.0 \\
0.0 & 0.0 & 0.0 & -0.7303 & 0.89443 & 0.0 & 1.0 & -0.70711 & 0.91287 \\
0.0 & 0.0 & 0.75 & 0.06455 & -1.26491 & 0.0 & 0.70711 & 1.0 & -1.29099 \\
0.0 & 0.0 & -0.96825 & 1.25 & 0.0 & 0.0 & -0.91287 & 1.29099 & 1.0
\end{array}\right).
\end{gather*}

One can also verify that this relation holds:

$$
A_{b_{ij}}A_{b_{kl}} = A_{b_{kl}}A_{b_{ij}}
$$
\resizebox{1.2\hsize}{!}{$= \left(\begin{array}{rrrrrrrrr}
1.0 & -0.35355 & 0.0 & 0.0 & 1.11803 & 0.27386 & 0.0 & 0.0 & -1.09545 \\
0.35355 & 1.0 & -1.0 & 0.0 & 0.0 & 0.0 & -1.06066 & 0.0 & 0.0 \\
0.0 & 1.0 & 1.0 & 0.7746 & -4.11096 & -0.7746 & 0.0 & -0.75 & 4.06663 \\
0.0 & 0.0 & -0.7746 & 1.0 & 1.22474 & 0.0 & 0.7303 & -0.06455 & -1.25 \\
-1.11803 & 0.0 & 4.11096 & -1.22474 & 1.0 & 0.0 & -4.24853 & 1.26491 & 0.0 \\
-0.27386 & 0.0 & 0.7746 & 0.0 & 0.0 & 1.0 & -0.82158 & 0.0 & 0.0 \\
0.0 & 1.06066 & 0.0 & -0.7303 & 4.24853 & 0.82158 & 1.0 & -0.70711 & 4.19921 \\
0.0 & 0.0 & 0.75 & 0.06455 & -1.26491 & 0.0 & 0.70711 & 1.0 & -1.29099 \\
1.09545 & 0.0 & -4.06663 & 1.25 & 0.0 & 0.0 & -4.19921 & 1.29099 & 1.0
\end{array}\right).$}

\end{appendices}

\pagebreak


\begin{thebibliography} {100}

\bibitem{Aurenhammer-Klein-Lee}
F. Aurenhammer, R. Klein and D.-T. Lee,
{\em Voronoi Diagrams and Delaunay Triangulations},
World Scientific Publishing, 2013, 337 pp.

\bibitem{Fedoseev-Manturov-Nikonov}
D. A. Fedoseev, V. O. Manturov and I. M. Nikonov,
Manifolds of triangulations, braid groups of manifolds, and the groups $\Gamma_n^k$,
preprint, 2020, arXiv:1912.02695v2 [math.GT].

\bibitem{Korepanov-pentagon}
I. G. Korepanov and N. M. Sadykov,
Pentagon relations in direct sums and Grassmann algebras,
{\em SIGMA} \textbf{9} (2013), 030, 16 pp.

\bibitem{Licata-col-braid}
J. Licata and V. V\'ertesi,
Liftable braids and colored braid groupoid,
preprint, 2025, arXiv:2508.05146v1 [math.GT].

\bibitem{Manturov-Non-Reid}
V. O. Manturov,
Non-Reidemeister Knot Theory and Its Applications in Dynamical Systems, Geometry, and Topology,
preprint, 2015, arXiv:1501.05208v1 [math.GT].

\bibitem{Manturov-Knots}
Vassily O. Manturov,
{\em Knot Theory}, Second Edition,
CRC Press, 2018, 580 pp.

\bibitem{Invariants-and-Pictures}
V. Manturov, D. Fedoseev, S. Kim and I. Nikonov,
{\em Invariants and Pictures},
World Scientific Publishing, 2020, 388 pp.

\bibitem{Manturov-Nikonov}
V. O. Manturov and I. M. Nikonov,
The groups $\Gamma_n^4$, braids, and 3-manifolds,
preprint, 2023, arXiv:2305.06316v1 [math.GT].

\bibitem{Rohozhkin}
I. E. Rohozhkin,
Pentagon equations, Delaunay triangulations and pure braid group invariant,
{\em Journal of Knot Theory and Its Ramifications}, 2025, 25 pp.

\bibitem{Tomotada}
T. Ohtsuki,
{\em Quantum Invariants: A Study of Knots, 3-Manifolds, and Their Sets},
World Scientific Publishing, 2002, 508 pp.

\end{thebibliography}
\end{document}